\theoremstyle{plain} 
\newtheorem{theorem}{\indent\sc Theorem}[section]
\newtheorem{lemma}[theorem]{\indent\sc Lemma}
\newtheorem{proposition}[theorem]{\indent\sc Proposition}
\theoremstyle{definition} 
\begin{document}

\title[Boundedness of oscillatory integral operators and their commutators]
{Boundedness of oscillatory integral operators and their commutators
on weighted Morrey spaces}

\author[Zunwei Fu]{Zunwei Fu}

\author[Shaoguang Shi]{Shaoguang Shi$^{\sharp}$}

\author[Shanzhen Lu]{Shanzhen Lu} 

\subjclass[2010]{
Primary 42B20; Secondary 42B25.}

\keywords{ 
weighted Morrey space, oscillatory integral, commutator, $A_{p}$ weights.\\
\,\,\,\,$^{\sharp}$  Corresponding author: Shishaoguang@yahoo.com.cn. }

\thanks{ 
This work was partially supported by
 NSF of China (Grant Nos. 10901076, 10931001 and 11171345) and
the Key Laboratory of Mathematics and Complex System (Beijing Normal
University), Ministry of Education, China.}
\address{
School of Sciences\endgraf Linyi University \endgraf Linyi
276005\endgraf P. R. China} \email{fuzunwei@lyu.edu.cn}
\address{
School of Mathematical Sciences\endgraf Beijing Normal
University\endgraf Beijing 100875\endgraf and\endgraf School of
Sciences\endgraf Linyi University \endgraf Linyi 276005\endgraf P.
R. China} \email{shishaoguang@lyu.edu.cn}
\address{
School of Mathematical Sciences\endgraf
Beijing Normal
University\endgraf
Beijing 100875\endgraf
P. R. China}
\email{lusz@bnu.edu.cn}


\maketitle

\begin{abstract}
It is proved that both oscillatory
integral operators and fractional oscillatory
integral operators are bounded on weighted Morrey spaces. The corresponding commutators generated by $BMO$ functions are also considered.
\end{abstract}

\section{Introduction} 
To investigate the local behavior of solutions to second order elliptic partial differential equations, Morrey \cite{Mo} first introduced the classical Morrey space $M_{p,q}(\mathbb{R}^{n})$ with the norm
$$
\|f\|_{M_{p,q}(\mathbb{R}^{n})}=\sup_{B\subset \mathbb{R}^{n}}\left(\frac{1}{|B|^{1-\frac{p}{q}}}\int_{B}|f(x)|^{p}dx\right)^{\frac{1}{p}},
$$
where $f\in L_{loc}^{p}(\mathbb{R}^{n})$ and $1\leq p\leq q<\infty.$ Here and after, $B$ denotes any balls in $\mathbb{R}^{n}$.
$M_{p,q}(\mathbb{R}^{n})$ was an expansion of $L^{p}(\mathbb{R}^{n})$ in the sense that $M_{p,p}(\mathbb{R}^{n})=L^{p}(\mathbb{R}^{n})$.

In \cite{CF}, Chiarenza and Frasca obtained the boundedness of Hardy-Littlewood maximal function
$$
Mf(x)=\sup_{B\ni x}\frac{1}{|B|}\int_{B}|f(y)|dy, \eqno(1.1)
$$
on $M_{p,q}(\mathbb{R}^{n})$.

 The Calder\'{o}n-Zygmund singular integral operator is defined by
$$
\widetilde{T}f(x)=\mathrm{p.v.}\int_{\mathbb{R}^{n}}K(x-y)f(y)dy, \eqno(1.2)
$$
where $K$ is a Calder\'{o}n-Zygmund kernel(CZK).
We say a kernel $K\in C^{1}(\mathbb{R}^{n}/ \{0\})$ is a CZK if it satisfies
$$|K(x)|\leq\frac{C}{|x|^{n}},\eqno(1.3)$$

$$|\nabla K(x)|\leq\frac{C}{|x|^{n+1}} \eqno(1.4)$$
and
$$\int_{a<|x|<b}K(x)dx=0, \eqno(1.5)$$
for all $a,b$ with $0<a<b.$ Chiarenza and Frasca \cite{CF} showed the boundedness of $\widetilde{T}$ on $M_{p,q}(\mathbb{R}^{n})$.
Here and subsequently, $C$ will denote a positive constant which may vary from line to line but will remain independent of the relevant quantities.

For $0<\alpha <n$, the fractional integral operator $I_{\alpha}$ is defined by
$$
I_{\alpha}f(x)=\int_{\mathbb{R}^{n}}\frac{f(y)}{|x-y|^{n-\alpha}}dy.
$$
The boundedness of $I_{\alpha}$ on $M_{p,q}(\mathbb{R}^{n})$ was established by Adams in \cite{A}. For some works on the boundedness for the multilinear singular integral operators on Morrey type spaces, we refer to \cite{GT}, \cite{ST} and \cite{TYZ}.

In \cite{KS}, Komori and Shirai gave the definition of weighted Morrey space, which is a natural generalization of weighted Lebesgue space. Let $1\leq p<\infty,$ $0<k<1$ and $w$ be a function. Then the weighted Morrey space $M_{p,k}(w)$ was defined by
$$M_{p,k}(w)=\{f\in L_{loc}^{p}(w): \|f\|_{M_{p,k}(w)}<\infty\},$$
where
$$
\|f\|_{M_{p,k}(w)}=\sup_{B}\left(\frac{1}{w(B)^{k}}\int_{B}|f(x)|^{p}w(x)dx\right)^{\frac{1}{p}},
$$
and the supremum is taken over all balls $B\subset \mathbb{R}^{n}.$ It is obviously that if $w=1, k=1-\frac{p}{q}$, then $M_{p,k}(w)=M_{p,q}(\mathbb{R}^{n})$. For $w\in A_{p}(1\leq p<\infty)$, if $k=0,$ then $M_{p,0}(w)=L^{p}(w)$ and if $k=1, $ $M_{p,1}(w)=L^{\infty}(w)$. Also, Komori and Shirai \cite{KS} obtained the boundedness of Hardy-Littlewoood maximal operator $M$ and Calder\'{o}n-Zygmund singular integral operator $\widetilde{T}$ on $M_{p,k}(w)$ with $1<p<\infty$. When $p=1,$ the corresponding weighted weak $(1,1)$ type boundedness was also true \cite{KS}. Here $A_{p}$ and the following $A_{(p,q)}$ denote the Muckenhoupt classes \cite{Mu}
$$A_{p}:\sup_{B}\left(\frac{1}{|B|}\int_{B}w(x)dx\right)\left(\frac{1}{|B|}\int_{B}w(x)^{1-p'}dx\right)^{p-1}\leq C, 1<p<\infty$$
and
$$
A_{(p,q)}:\sup_{B}\left(\frac{1}{|B|}\int_{B}w(x)^{q}dx\right)^{\frac{1}{q}}\left(\frac{1}{|B|}\int_{B}w(x)^{-p'}dx\right)^{\frac{1}{p'}}\leq C, 1<p, q<\infty
$$
respectively. Here $1/p+1/p'=1$.

In the fractional case, we need to consider a weighted Morrey space with two weights which also introduced by Komori and Shirai \cite{KS}. Let $1\leq p<\infty$, $0<k<1$. For two weights $w_{1}$ and $w_{2}$,
$$
M_{p,k}(w_{1},w_{2})=\left\{ f:\|f\|_{M_{p,k}(w_{1},w_{2})}<\infty\right\},
$$
with the norm
$$
\|f\|_{M_{p,k}(w_{1},w_{2})}=\sup_{B}\left(\frac{1}{w_{2}(B)^{k}}\int_{B}|f(x)|^{p}w_{1}(x)dx\right)^{\frac{1}{p}},
$$
and the supremum is taken over all balls $B\subset \mathbb{R}^{n}.$ If $w_{1}=w_{2}=w$, then we denote $M_{p,k}(w_{1},w_{1})=M_{p,k}(w_{2},w_{2})=M_{p,k}(w)$. In \cite{KS}, the authors considered the weighted estimates of the fractional maximal operator $M_{\alpha}$
$$
M_{\alpha}f(x)=\sup_{B\ni x}\frac{1}{|B|^{n-\alpha}}\int_{B}|f(y)|dy
\eqno(1.6)
$$
and the fractional integral operator $I_{\alpha}$ with $0<\alpha<n$.

It is worth pointing out that the kernel in (1.2) is convolution kernel. However, there were many kinds of operators with non-convolution kernels, such as Fourier transform and Radon transform \cite{PS} which both are versions of oscillatory integrals.
The object we consider in this paper is a class of oscillatory integrals due to
Ricci and Stein  \cite{RS}
$$Tf(x)=\mathrm{p.v.}\int_{\mathbb{R}^{n}}e^{iP(x,y)}K(x-y)f(y)dy,\eqno(1.7)$$
where $P(x,y)$ is a real valued polynomial defined on $\mathbb{R}^{n}\times\mathbb{R}^{n}$, and $K$ is a CZK.

It is well known that the oscillatory factor $e^{ip(x,y)}$ makes it impossible to establish the weighted norm inequalities of (1.7) by the method as in the case of  Calder\'{o}n-Zygmund  operators or fractional integrals.
In \cite{Sa}, Sato established the weighted weak $(1,1)$ type estimate of $T$. The strong type weighted norm inequality of $T$ was proved by Lu and Zhang in \cite{LZ} with a more general case. For the other classical works about oscillatory integral, we refer to \cite{L2}, \cite{L3} and \cite{LX}.
Inspired by \cite{KS} and \cite{Sa}, we will study the weak type estimates for $T$ on weighted morrey space, that is
\begin{theorem}
If $K$ is a \emph{CZK}, $w\in A_{1}$ and $0<k<1$, then there exists a constant $C$ independent on the coefficients of $P$ such that
$$
\sup_{\lambda>0}\lambda w\left(\left\{x\in B:|Tf(x)|>\lambda\right\}\right)\leq C\|f\|_{M_{1,k}(w)}w(B)^{k}.
$$
\end{theorem}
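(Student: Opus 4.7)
The plan is a standard Morrey-space localization around the fixed ball $B = B(x_0, r)$. Decompose $f = f_1 + f_2$ with $f_1 = f\chi_{2B}$ and $f_2 = f\chi_{\mathbb{R}^n \setminus 2B}$, and split the level set at $\lambda/2$:
$$
\lambda w\bigl(\{x\in B:|Tf(x)|>\lambda\}\bigr) \le \lambda w\bigl(\{x:|Tf_1(x)|>\lambda/2\}\bigr) + \lambda w\bigl(\{x\in B:|Tf_2(x)|>\lambda/2\}\bigr).
$$
For the first piece I would quote Sato's weighted weak $(1,1)$ boundedness of $T$ (cited just before the theorem), which together with the doubling of the $A_1$ weight $w$ yields
$$
\lambda w\bigl(\{x:|Tf_1(x)|>\lambda/2\}\bigr) \le C\int_{2B}|f(y)|w(y)\,dy \le C\|f\|_{M_{1,k}(w)}w(2B)^k \le C\|f\|_{M_{1,k}(w)}w(B)^k,
$$
so the local part is already in the desired form.

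The substantive work is the pointwise control of $Tf_2$ on $B$. For $x\in B$ and $y\notin 2B$ one has $|x-y|\simeq|y-x_0|$, so the size estimate (1.3) and the trivial identity $|e^{iP(x,y)}|=1$ give the non-oscillatory bound
$$
|Tf_2(x)| \le C\int_{\mathbb{R}^n\setminus 2B}\frac{|f(y)|}{|y-x_0|^n}\,dy \le C\sum_{j=1}^{\infty}\frac{1}{|2^{j+1}B|}\int_{2^{j+1}B}|f(y)|\,dy.
$$
Here is where $w\in A_1$ is used: since $A_1$ provides $w(y)^{-1}\le C|2^{j+1}B|/w(2^{j+1}B)$ for a.e.\ $y\in 2^{j+1}B$, one rewrites $dy = w(y)^{-1}\,w(y)\,dy$ inside each annular integral and applies the definition of the Morrey norm to get
$$
|Tf_2(x)| \le C\|f\|_{M_{1,k}(w)}\sum_{j=1}^{\infty}w(2^{j+1}B)^{k-1}.
$$

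The main obstacle is the convergence of this series, since $k-1<0$. I would resolve it by invoking the $A_\infty$ reverse doubling enjoyed by $A_1$ weights: there exist $c,\delta>0$ such that $w(2^{j+1}B)\ge c\,2^{(j+1)n\delta}w(B)$, which makes the tail geometric and produces
$$
|Tf_2(x)| \le C\|f\|_{M_{1,k}(w)}w(B)^{k-1} \qquad \text{for every } x\in B.
$$
Given this uniform bound, a two-case split finishes the job: if $\lambda/2$ exceeds the right-hand side the level set is empty, while otherwise the trivial estimate $w(\{\cdots\}\cap B)\le w(B)$ multiplied by $\lambda$ is at most $C\|f\|_{M_{1,k}(w)}w(B)^k$. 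Adding the two contributions yields the claimed weak-type inequality, with a constant depending only on $n$, $k$, the CZK constants, and $[w]_{A_1}$ (through the constant in Sato's theorem), but independent of the coefficients of $P$.
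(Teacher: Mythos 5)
Your proposal is correct and follows essentially the same route as the paper: the same $f=f\chi_{2B}+f\chi_{(2B)^c}$ splitting, Sato's weighted weak $(1,1)$ estimate plus doubling for the local part, the pointwise annular bound on $Tf_2$ combined with the $A_1$ condition and the Morrey norm, and reverse doubling to sum the geometric series. Your two-case split at the end is just Chebyshev's inequality in disguise, which is exactly how the paper closes the argument; if anything, your explicit use of the $A_1$ essential-infimum inequality is cleaner than the paper's terse appeal to ``H\"older and the $A_p$ condition.''
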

A distribution kernel $K$ is called a standard Calder\'{o}n-Zygmund kernel(SCZK) if it satisfies the following hypotheses
$$|K(x,y)|\leq\frac{C}{|x-y|^{n}},\, x\neq y \eqno(1.8)$$
and
$$|\nabla_{x}K(x,y)|+|\nabla_{y}K(x,y)|\leq\frac{C}{|x-y|^{n+1}},\, x\neq y.\eqno(1.9)$$
The corresponding Calder\'{o}n-Zygmund integral operator $\widetilde{S}$ and oscillatory integral operator $S$ are defined by
$$\widetilde{S}f(x)=\mathrm{p.v.}\int_{\mathbb{R}^{n}}K(x,y)f(y)dy \eqno(1.10)$$
and
$$Sf(x)=\mathrm{p.v.}\int_{\mathbb{R}^{n}}e^{iP(x,y)}K(x,y)f(y)dy.\eqno(1.11)$$
where $P(x,y)$ is a real valued polynomial defined on $\mathbb{R}^{n}\times\mathbb{R}^{n}$. In 1992, Lu and zhang \cite{LZ} proved that $S$ was bounded on $L^{p}(w)$ with $1<p<\infty$, $w\in A_{p}$ by the methods of interpolation of operators with change of measures \cite{SW}.
In \cite{RS}, Ricci and Stein also introduced the standard fractional Calder\'{o}n-Zygmund kernel(SFCZK) $K_{\alpha}$ with $0<\alpha<n$, where the condition (1.8) and (1.9) were replaced by
$$|K_{\alpha}(x,y)|\leq\frac{C}{|x-y|^{n-\alpha}},\, x\neq y \eqno(1.12)$$
and
$$|\nabla_{x}K_{\alpha}(x,y)|+|\nabla_{y}K_{\alpha}(x,y)|\leq\frac{C}{|x-y|^{n+1-\alpha}},\, x\neq y.\eqno(1.13)$$
The corresponding fractional oscillatory integral operator is defined by
$$S_{\alpha}f(x)=\int_{\mathbb{R}^{n}}e^{iP(x,y)}K_{\alpha}(x,y)f(y)dy.\eqno(1.14)$$
where $P(x,y)$ is also a real valued polynomial defined on
$\mathbb{R}^{n}\times\mathbb{R}^{n}$. Obviously, when $\alpha=0$,
$S_{0}=S$ and $K_{0}=K$. Recently, the authors of this paper
obtained the weighted boundedness of $S_{\alpha}$ in \cite{SFL}.
Partly motivated by the idea from \cite{KS} and the results of
\cite{LZ} as well as \cite{SFL}, we now give another two results of
this paper
\begin{theorem} Let $1<p<\infty,$ $0<k<1$ and $w\in A_{p}$. If the Calder\'{o}n-Zygmund singular integral operator
$\widetilde{S}$ is of type $(L^{2}(\mathbb{R}^{n}), L^{2}(\mathbb{R}^{n}))$, then for any real polynomial $P(x,y)$, there exists constant $C>0$ such that
$$
\|Sf(x)\|_{M_{p,k}(w)}\leq C\|f\|_{M_{p,k}(w)},
$$
 where its norm depends only on the total degree of $P$, but not on the coefficients of $P$.
\end{theorem}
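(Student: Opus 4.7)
The plan is to follow the two-piece decomposition introduced in \cite{KS}. Fix a ball $B=B(x_{0},r)$, split $f=f_{1}+f_{2}$ with $f_{1}=f\chi_{2B}$ and $f_{2}=f\chi_{(2B)^{c}}$, and estimate $w(B)^{-k/p}\bigl(\int_{B}|Sf_{i}|^{p}w\bigr)^{1/p}$ separately for $i=1,2$, uniformly in $B$. By sublinearity this will give the theorem.

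For the local piece $f_{1}$ I would invoke the Lu--Zhang weighted norm inequality from \cite{LZ}: since $\widetilde{S}$ is bounded on $L^{2}(\mathbb{R}^{n})$, the operator $S$ is bounded on $L^{p}(w)$ for every $w\in A_{p}$, with constant depending only on the total degree of $P$. Combined with the doubling property of $A_{p}$ weights, this yields
\[
\int_{B}|Sf_{1}|^{p}w\leq \int_{\mathbb{R}^{n}}|Sf_{1}|^{p}w\leq C\int_{2B}|f|^{p}w\leq C\|f\|_{M_{p,k}(w)}^{p}\,w(2B)^{k}\leq C\|f\|_{M_{p,k}(w)}^{p}\,w(B)^{k}.
\]

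For the global piece $f_{2}$ I would use only the trivial pointwise bound $|e^{iP(x,y)}K(x,y)|\leq C|x-y|^{-n}$; the oscillation plays no role in the tail analysis. Decomposing $(2B)^{c}=\bigcup_{j\geq 1}(2^{j+1}B\setminus 2^{j}B)$ and using $|x-y|\sim 2^{j}r$ on each annulus for $x\in B$ gives
\[
|Sf_{2}(x)|\leq C\sum_{j=1}^{\infty}\frac{1}{|2^{j+1}B|}\int_{2^{j+1}B}|f(y)|\,dy.
\]
H\"older's inequality together with the $A_{p}$ consequence $\bigl(\int_{2^{j+1}B}w^{1-p'}\bigr)^{1/p'}\leq C|2^{j+1}B|\,w(2^{j+1}B)^{-1/p}$ and the definition of the Morrey norm bound the inner integral by $C\|f\|_{M_{p,k}(w)}|2^{j+1}B|\,w(2^{j+1}B)^{(k-1)/p}$, so
\[
|Sf_{2}(x)|\leq C\|f\|_{M_{p,k}(w)}\sum_{j=1}^{\infty}w(2^{j+1}B)^{(k-1)/p}.
\]

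The main technical step is then summing this series. Since $w\in A_{p}\subset A_{\infty}$, there exists $\delta>0$ such that $w(B)/w(2^{j+1}B)\leq C\,2^{-jn\delta}$; because $(k-1)/p<0$, this converts to geometric decay in $j$, and the sum is dominated by $Cw(B)^{(k-1)/p}$. Hence $|Sf_{2}(x)|\leq C\|f\|_{M_{p,k}(w)}w(B)^{(k-1)/p}$ uniformly on $B$, and integrating the $p$-th power against $w$ over $B$ yields the desired Morrey bound. I expect the only non-routine step to be this $A_{\infty}$-based summation: the contribution of the oscillatory factor $e^{iP(x,y)}$ is absorbed entirely by the Lu--Zhang $L^{p}(w)$ bound used for $f_{1}$, while the global part is insensitive to oscillation and needs only the size estimate on $K$.
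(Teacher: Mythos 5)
Your proposal is correct and follows essentially the same route as the paper: the same $f=f\chi_{2B}+f\chi_{(2B)^{c}}$ splitting, the Lu--Zhang weighted $L^{p}(w)$ bound plus doubling for the local piece, and the size estimate on $K$ with H\"older and the $A_{p}$ condition for the tail. The only cosmetic difference is that you sum the series $\sum_{j}w(2^{j+1}B)^{(k-1)/p}$ via the $A_{\infty}$ smallness property $w(B)/w(2^{j+1}B)\leq C2^{-jn\delta}$, whereas the paper invokes the equivalent reverse doubling inequality $(2.2)$; both yield the same geometric decay.
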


\begin{theorem} Let $\frac{1}{q}=\frac{1}{p}-\frac{\alpha}{n}$, $1<p<\frac{n}{\alpha},$ $0<k<\frac{p}{q},$ $0<\alpha<n$ and $w\in A_{(p,q)}$. For any real polynomial $P(x,y)$, there exists constant $C>0$ such that
$$
\|S_{\alpha}f(x)\|_{M_{q,\frac{qk}{p}}(w^{q})}\leq C\|f\|_{M_{p,k}(w^{p},w^{q})},
$$
where its norm depends only on the total degree of $P$, but not on the coefficients of $P$.
If $p=1$ and $w\in A_{(1,q)}$ with $q=\frac{n}{n-\alpha}$, then for all $\lambda>0,$ there exists constant $C>0$ such that
$$
 w^{q}\left(\left\{x\in B:|S_{\alpha}f(x)|>\lambda\right\}\right)\leq \frac{C}{\lambda^{q}}\|f\|_{M_{1,k}(w,w^{q})}^{q}(w^{q}(B))^{kq}.
$$
\end{theorem}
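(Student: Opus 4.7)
The plan is to adapt the Komori--Shirai decomposition argument to the fractional oscillatory setting, using as a black box the weighted Lebesgue boundedness of $S_\alpha$ established in \cite{SFL}. For a fixed ball $B=B(x_0,r)$, I would split $f=f_1+f_2$ with $f_1=f\chi_{2B}$, so that $S_\alpha f=S_\alpha f_1+S_\alpha f_2$. The local piece $S_\alpha f_1$ is absorbed immediately by the $L^p(w^p)\to L^q(w^q)$ bound from \cite{SFL}:
$$\left(\int_B |S_\alpha f_1|^q w^q\right)^{1/q} \le C\left(\int_{2B}|f|^p w^p\right)^{1/p} \le C\|f\|_{M_{p,k}(w^p,w^q)}\, w^q(2B)^{k/p},$$
which is folded into the target norm using the doubling of $w^q$ (automatic since $w\in A_{(p,q)}$ forces $w^q\in A_q$). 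The dependence of the implicit constant on the total degree of $P$ only (and not on its coefficients) is inherited directly from \cite{SFL}.

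For the global piece, for $x\in B$ I would discard the oscillatory factor via $|e^{iP(x,y)}|=1$ and split $(2B)^c$ into annular shells $2^{j+1}B\setminus 2^j B$, obtaining the pointwise estimate
$$|S_\alpha f_2(x)| \le C\sum_{j=1}^\infty \frac{1}{|2^{j+1}B|^{1-\alpha/n}}\int_{2^{j+1}B}|f(y)|\,dy.$$
H\"older's inequality with exponents $(p,p')$ applied to $|f|\cdot w\cdot w^{-1}$, together with the $A_{(p,q)}$ condition rewritten as $w^{-p'}(2^{j+1}B)^{1/p'}\le C|2^{j+1}B|^{1-\alpha/n}w^q(2^{j+1}B)^{-1/q}$, converts each annular term into $C\|f\|_{M_{p,k}(w^p,w^q)}\, w^q(2^{j+1}B)^{k/p-1/q}$. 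Integrating $|\cdot|^q w^q$ over $B$ and taking a $q$th root reduces the entire global estimate to controlling
$$\sum_{j=1}^\infty \left(\frac{w^q(2^{j+1}B)}{w^q(B)}\right)^{k/p-1/q}.$$

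The main obstacle is the convergence of this series. The exponent $k/p-1/q$ is strictly negative by the hypothesis $k<p/q$, but mere doubling provides only an upper bound on $w^q(2^{j+1}B)$, which is the wrong direction. The remedy is the quantitative $A_\infty$ property of $w^q$, which gives $w^q(B)/w^q(2^{j+1}B)\le C\, 2^{-(j+1)n\delta}$ for some $\delta>0$; raising this to the negative power $k/p-1/q$ yields a genuine geometric series of ratio $2^{n\delta(k/p-1/q)}<1$, summing to an absolute constant. Combining with the local piece completes the strong-type Morrey inequality.

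For the weak endpoint ($p=1$, $q=n/(n-\alpha)$) the same decomposition applies. The local piece is controlled by the weighted weak $(1,q)$ bound of $S_\alpha$ from \cite{SFL} together with $\int_{2B}|f|w\le \|f\|_{M_{1,k}(w,w^q)}\, w^q(2B)^k$ and doubling. For the global piece, the $A_{(1,q)}$ condition takes the form $w^q(B)^{1/q}\le C|B|^{1/q}\inf_B w$, and the analogous annular H\"older argument (with $\|w^{-1}\chi_{2^{j+1}B}\|_\infty$ replacing the $w^{-p'}$ factor, and the same $A_\infty$ summation trick) yields the uniform pointwise bound $|S_\alpha f_2(x)|\le C\|f\|_{M_{1,k}(w,w^q)}\, w^q(B)^{k-1/q}$ on $B$. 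When $\lambda$ exceeds twice this threshold the global level set is empty; otherwise one bounds $w^q(\{x\in B:|S_\alpha f_2(x)|>\lambda/2\})\le w^q(B)$ and substitutes the threshold for $\lambda$ to absorb the extra factors into $C\lambda^{-q}\|f\|^q w^q(B)^{kq}$. The oscillatory phase never interacts with the Morrey arithmetic; it enters only through the Lebesgue bounds imported from \cite{SFL}, which is precisely what makes the uniform dependence on the degree of $P$ propagate to the final estimates.
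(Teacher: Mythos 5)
Your argument is correct, but it takes a genuinely different (and much longer) route than the paper. The paper disposes of Theorem 1.3 in three lines: by the size condition (1.12) one has the pointwise domination $|S_{\alpha}f(x)|\leq I_{\alpha}(|f|)(x)$, after which both the strong and weak conclusions are read off directly from the weighted Morrey bounds for $I_{\alpha}$ recorded in Lemma 2.4 (quoted from \cite{KS} and \cite{SFL}). You instead re-run the Komori--Shirai local/global decomposition at the level of $S_{\alpha}$ itself: the local piece via the $L^{p}(w^{p})\to L^{q}(w^{q})$ bound of \cite{SFL}, the global piece via annuli, the $A_{(p,q)}$ identity $1/p'+1/q=1-\alpha/n$, and the quantitative $A_{\infty}$ (reverse-doubling) property to sum the series with negative exponent $k/p-1/q$ --- which is exactly the mechanism the paper uses in its proof of Theorem 1.2, here transplanted to the fractional setting. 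Your computations check out, including the endpoint case where $1-\alpha/n=1/q$ makes the volume factors cancel and the ``threshold'' argument converts the uniform pointwise bound on $S_{\alpha}f_{2}$ into the weak estimate. What the paper's route buys is brevity and the fact that no oscillatory input is needed at all (so the constant is trivially independent of $P$, degree included); what your route buys is self-containedness at the Morrey level --- you only need Lebesgue-space bounds as input rather than the Morrey-space theorem for $I_{\alpha}$ --- at the cost of attributing to \cite{SFL} a weighted weak $(1,q)$ Lebesgue bound for $S_{\alpha}$ that is not explicitly stated in the paper's Lemma 2.4, though it too follows from the same pointwise domination by $I_{\alpha}$.
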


We prove Theorem 1.1-Theorem 1.3 in Section 2.  In Section 3, we set up the weighted norm inequalities for the corresponding commutators of $S$ and $S_{\alpha}$, respectively.
Throughout this paper all definitions and notations are standard. A weight $w$ is a locally integrable function on $\mathbb{R}^{n}$ which takes values in $(0,\infty)$ almost everywhere.
$B=B(x_{0},r)$ denotes the ball with center $x_{0}$ and radius $r.$ Given $\lambda>0,$ $\lambda B=B(x_{0},\lambda r)$.

\section{Weighted estimates for oscillatory integral operators}

We begin this section with some properties of $A_{p}$ weight classes which play important role in the proofs of our main results.
\begin{lemma}\label{Lemma} \,\,\cite{G}
Let $1\leq p<\infty$, and $w\in A_{p}$. Then the following statements are true

$\mathrm{(1)}$\,\, There exists a constant $C$ such that $$w(2B)\leq Cw(B).\eqno(2.1)$$
When $w$ satisfies this condition, we say $w$ satisfies doubling condition.

$\mathrm{(2)}$\,\,  There exists a constant $C>1$ such that $$w(2B)\geq Cw(B).\eqno(2.2)$$
When $w$ satisfies this condition, we say $w$ satisfies reverse doubling condition.

$\mathrm{(3)}$\,\,  There exist two constant $C$ and $r>1$ such that the following reverse H\"{o}lder inequality holds for every ball $B\subset \mathbb{R}^{n}$
$$\left(\frac{1}{|B|}\int_{B}w(x)^{r}dx \right)^{\frac{1}{r}}\leq C\left(\frac{1}{|B|}\int_{B} w(x)dx\right).\eqno(2.3)$$

$\mathrm{(4)}$\,\,  For all $\lambda>1,$ we have
$$
w(\lambda B)\leq C\lambda^{np}w(B).\eqno(2.4)
$$

$\mathrm{(5)}$\,\,  There exist two constant $C$ and $\delta>0$ such that for any measurable set $Q\subset B$
$$\frac{w(Q)}{w(B)}\leq C\left( \frac{|Q|}{|B|}\right)^{\delta}.\eqno(2.5)$$
If $w$ satisfies $(2.5)$, we say $w\in A_{\infty}$.

$\mathrm{(6)}$\,\,  For all $\frac{1}{p}+\frac{1}{p'}=1$ we have
$$
A_{\infty}=\bigcup_{1<p<\infty}A_{p}, \,\,\,\,\,\, w^{1-p'}\in A^{p'}.\eqno(2.6)
$$
\end{lemma}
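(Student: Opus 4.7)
This lemma collects six standard properties of Muckenhoupt $A_p$ weights, and my plan is to derive all of them from two central engines: the basic $A_p$ comparison inequality, and the reverse H\"older self-improvement. The comparison inequality asserts that for any ball $B$ and any measurable $E\subset B$,
$$\left(\frac{|E|}{|B|}\right)^{p}\leq [w]_{A_{p}}\,\frac{w(E)}{w(B)},$$
which is essentially the $A_p$ condition tested against $\chi_E$ via H\"older's inequality. This one estimate, together with the reverse H\"older inequality (statement (3)), is the mechanism behind essentially every statement in the lemma.

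Properties (1) and (4) come directly out of the comparison inequality by taking $E=B$ inside the larger ball $2B$ (respectively $\lambda B$): one immediately obtains $w(2B)\leq 2^{np}[w]_{A_{p}}w(B)$ and $w(\lambda B)\leq C\lambda^{np}w(B)$. Property (6) splits into two pieces: the duality $w^{1-p'}\in A_{p'}$ is a symbol-pushing exercise using the symmetry of the $A_p$ definition under $(p,w)\leftrightarrow(p',w^{1-p'})$ together with $(1-p')(p-1)=-1$; and the identification $A_{\infty}=\bigcup_{p>1}A_{p}$ follows once (3) and (5) are in hand, because the reverse H\"older exponent lets one absorb a small power of $w$ and place $w$ into some $A_{p}$.

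The main obstacle, and the only genuinely technical step, is the reverse H\"older inequality (3). The plan here is the classical Calder\'on--Zygmund stopping-time argument: fix a ball $B$, set $w_{B}=|B|^{-1}\int_{B}w$, and perform a Calder\'on--Zygmund decomposition of $w$ at heights $\lambda_{k}=a^{k}w_{B}$ for some large $a>1$. One obtains nested level sets $E_{k}\subset B$ whose Lebesgue measures decay geometrically, $|E_{k+1}|\leq \theta|E_{k}|$ with $\theta<1$, and feeding this back through the comparison inequality (*) yields the same geometric decay for $w(E_{k})$. Summing a geometric series in the level-set representation of $\int_{B}w^{r}$ shows that $w\in L^{r}(B)$ with the quantitative bound in (3) for some $r=r(p,[w]_{A_{p}})>1$.

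With (3) established, properties (2) and (5) drop out quickly. Indeed, H\"older's inequality combined with the reverse H\"older bound gives
$$\frac{w(E)}{w(B)}\leq C\left(\frac{|E|}{|B|}\right)^{\delta}\quad\text{for }E\subset B,\ \delta=1-\tfrac{1}{r},$$
which is exactly (5). For (2), apply this with the roles reversed by viewing $B$ as a proper subset of $2B$: since $|B|/|2B|=2^{-n}<1$, the inequality $w(B)/w(2B)\leq C\cdot 2^{-n\delta}$ can be rearranged to give $w(2B)\geq Cw(B)$ with $C>1$, as required. I would carry the six parts out in the order (1), (4), (3), (5), (2), (6), since (3) is the bottleneck and every subsequent item depends on it.
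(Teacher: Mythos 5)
The paper does not prove this lemma at all; it is quoted from Grafakos's book, so there is no ``paper proof'' to match. Your outline follows the standard textbook development (comparison inequality $(|E|/|B|)^{p}\leq [w]_{A_{p}}w(E)/w(B)$, Calder\'on--Zygmund stopping time for reverse H\"older, and the $A_{\infty}$ estimate as a corollary), and parts (1), (3), (4), (5) and (6) are handled correctly in outline.

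However, your derivation of the reverse doubling property (2) has a genuine gap. From (5) with $Q=B$ inside $2B$ you get $w(B)/w(2B)\leq C\,2^{-n\delta}$, which rearranges to $w(2B)\geq C^{-1}2^{n\delta}w(B)$; but the constant $C$ in (5) is in general $\geq 1$ and $\delta$ may be small, so nothing forces $C^{-1}2^{n\delta}>1$, and the inequality you obtain can be strictly weaker than the trivial bound $w(2B)\geq w(B)$. Statement (2) requires a constant \emph{strictly larger} than $1$, and the standard route is different: pick a ball $B'=B(y,r/4)$ with $|y-x_{0}|=3r/2$, so that $B'\subset 2B\setminus B$ and $B\subset \lambda B'$ for a fixed dimensional $\lambda$; the doubling property (1)/(4) then gives $w(B')\geq c\,w(B)$ with $c>0$ depending only on $n$, $p$ and $[w]_{A_{p}}$, and additivity yields $w(2B)\geq w(B)+w(B')\geq (1+c)\,w(B)$. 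You should replace your argument for (2) by this annulus argument (or an equivalent one); as written, that step does not prove the claim.
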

Our argument based heavily on the following well-known results about $T$, $S$ and $S_{\alpha}$.

\begin{lemma} \,\,\cite{Sa} If $K$ is a \emph{CZK}, $w\in A_{1}$, then there exist constant $C>0$ independent on the coefficients of $P$ such that
$$
\sup_{\lambda>0}\lambda w\left(\left\{x\in \mathbb{R}^{n}:|Tf(x)|>\lambda\right\}\right)\leq C\|f\|_{L^{1}(w)}.
$$
\end{lemma}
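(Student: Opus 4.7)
The plan is to follow the Ricci--Stein induction on the total degree $d$ of the real polynomial $P(x,y)$, adapted to the $A_1$-weighted setting as in Sato. When $d=0$, the factor $e^{iP}$ reduces to a unimodular constant and $T$ agrees (up to this constant) with the classical Calder\'on--Zygmund convolution operator $\widetilde{T}$, whose weighted weak $(1,1)$ inequality for $w\in A_1$ is the Coifman--Fefferman theorem. For the inductive step, I would first split $P(x,y)=P_1(x)+P_2(y)+Q(x,y)$, where $Q$ collects only the genuinely mixed monomials. Since $e^{iP_1(x)}$ is a unimodular multiplier applied after $T$ and $e^{iP_2(y)}$ is a unimodular multiplier applied to $f$, neither changes the weighted weak $(1,1)$ norm, so one may assume $P=Q$.

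Next I would perform a weighted Calder\'on--Zygmund decomposition of $f$ at height $\lambda$, producing $f=g+b$ with $b=\sum_j b_j$ supported on pairwise disjoint balls $B_j=B(y_j,r_j)$ satisfying $\int b_j\,dy=0$, $\|g\|_{L^\infty}\le C\lambda$, $\|g\|_{L^1(w)}\le\|f\|_{L^1(w)}$, and $\sum_j w(B_j)\le C\|f\|_{L^1(w)}/\lambda$ (this uses the doubling property $(2.1)$ of $w\in A_1$). For the good part, I combine the weighted $L^2$ bound for $T$ (valid since $A_1\subset A_2$) due to Lu--Zhang with Chebyshev's inequality and the interpolation estimate $\|g\|_{L^2(w)}^2\le C\lambda\|f\|_{L^1(w)}$. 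For the bad part, one restricts to $x\notin E:=\bigcup_j 2B_j$, whose $w$-measure is $\le C\|f\|_{L^1(w)}/\lambda$ by doubling, and uses the cancellation $\int b_j\,dy=0$ to write
$$Tb(x)=\sum_j\int_{B_j}\bigl[e^{iP(x,y)}K(x-y)-e^{iP(x,y_j)}K(x-y_j)\bigr]b_j(y)\,dy.$$

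Splitting the bracket as $e^{iP(x,y)}[K(x-y)-K(x-y_j)]+[e^{iP(x,y)}-e^{iP(x,y_j)}]K(x-y_j)$, the first, \emph{kernel smoothness}, piece is handled by the CZK gradient bound $(1.4)$ and a standard dyadic annulus summation in $|x-y_j|$. The main obstacle is the second, \emph{oscillation}, piece: the trivial bound $|e^{iP(x,y)}-e^{iP(x,y_j)}|\le|P(x,y)-P(x,y_j)|$ grows polynomially in $|x|$ and is too large on distant annuli. The remedy is to partition $\{x:|x-y_j|\ge 2r_j\}$ into dyadic shells and, on each shell, rescale so that the leading mixed derivative of $P$ at $y_j$ has unit size; the rescaled residual operator has oscillatory phase of strictly smaller total degree than $d$, so the inductive hypothesis applies with a constant depending only on $d$, not on the coefficients of $P$. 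Summing the shell contributions over $j$ and invoking the reverse doubling $(2.2)$ and reverse H\"older $(2.3)$ estimates of Lemma 2.1 to convert the resulting $|B_j|$-bounds into $w(B_j)$-bounds yields $w(\{|Tb|>\lambda/2\}\setminus E)\le C\|f\|_{L^1(w)}/\lambda$, completing the inductive step and hence the proof.
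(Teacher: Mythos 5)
First, a point of reference: the paper offers no proof of this lemma at all --- it is quoted verbatim from Sato \cite{Sa} as a known input, so there is no internal argument to compare yours against. Judged as a reconstruction of Sato's proof, your outline has the right skeleton (the Chanillo--Christ scheme for the unweighted weak $(1,1)$ bound, adapted to $A_1$ weights): removal of the pure-$x$ and pure-$y$ monomials, a Calder\'on--Zygmund decomposition, the good part via the weighted $L^2$ theory, and a split of the bad part into a kernel-smoothness term and an oscillation term. Two small corrections along the way: the bound $\sum_j w(B_j)\le C\lambda^{-1}\|f\|_{L^1(w)}$ comes from the $A_1$ condition $w(B)/|B|\le C\,\mathrm{ess\,inf}_B\,w$ applied to the selection inequality $|B_j|^{-1}\int_{B_j}|f|>\lambda$, not from doubling alone; and the same $A_1$ inequality is what gives $\|g\|_{L^1(w)}\le C\|f\|_{L^1(w)}$.

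The genuine gap is in the decisive step, the oscillation term of the bad part. Your plan --- rescale so the leading mixed derivative of $P$ has unit size, observe that ``the rescaled residual operator has oscillatory phase of strictly smaller total degree,'' and apply the inductive hypothesis --- does not work as stated, for two reasons. First, normalizing a coefficient does not lower the degree of the phase: $P(x,y)-P(x,y_j)$ is still of total degree $d$ in general, and in the Ricci--Stein-type induction it is the \emph{lower-degree remainder} that is handed to the inductive hypothesis, while the top-degree homogeneous part is exactly where new work is required, in the form of quantitative $L^2$ decay on dyadic shells ($\|T_k\|_{L^2\to L^2}\le C2^{-\epsilon k}$ above the critical scale determined by the normalized coefficients) proved by van der Corput/Ricci--Stein oscillatory estimates. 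Second, even granting such decay, it is an $L^2$ estimate and the bad pieces $b_j$ carry only $L^1$ control; Cauchy--Schwarz on a shell loses a factor of the shell's measure and the sum does not close. The whole point of Chanillo--Christ (and of Sato's weighted version, which additionally needs the reverse H\"older inequality to pass from Lebesgue to $w$-measure on the shells) is the device that converts the $L^2$ shell decay into an $L^1$ bound for the bad part --- for instance comparing $b_j$ with its averaged surrogate $\|b_j\|_{L^1}|B_j|^{-1}\chi_{B_j}$, whose $L^2$ norm is controlled, and absorbing the difference into the cancellation argument. Also, the inductive hypothesis is a weak-type $(1,1)$ bound for the full operator, which cannot simply be summed over shells with a geometric gain. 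Without these ingredients the inductive step does not close, so the sketch as written has a real hole at its core.
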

\begin{lemma} \,\,\cite{LZ} If $K$ is a \emph{SCZK}, $w\in A_{p}(1<p<\infty)$ and the Calder\'{o}n-Zygmund singular integral operator
$\widetilde{S}$
is of type $(L^{2}(\mathbb{R}^{n}), L^{2}(\mathbb{R}^{n}))$, then for any real polynomial $P(x,y)$, there exists $C>0$ such that
$$\|Sf\|_{L^{p}(w)}\leq C\|f\|_{L^{p}(w)},$$
where its norm depends only on the total degree of $P$, but not on the coefficients of $P$.
\end{lemma}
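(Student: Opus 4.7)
The plan is to deduce the weighted $L^{p}(w)$ bound from the corresponding unweighted $L^{q}$ bounds for $S$ by means of the Stein--Weiss interpolation with change of measures, following the scheme of Lu and Zhang.

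First, I would establish that $S$ is bounded on $L^{q}(\mathbb{R}^{n})$ for every $1<q<\infty$, with a constant depending only on the total degree $d$ of $P$ and not on its coefficients. This is the Ricci--Stein type theorem for oscillatory integrals with standard kernels. The $L^{2}$ boundedness of $S$ is deduced from that of $\widetilde{S}$ by a $TT^{*}$ / almost-orthogonality argument based on the kernel bounds $(1.8)$--$(1.9)$ and the polynomial structure of $P$. To pass to general $q$, I would argue by induction on $d$: the base case $d=0$ is immediate, and in the inductive step I would decompose $P(x,y)=P_{1}(x)+P_{2}(y)+R(x,y)$, where $R$ contains only genuinely mixed monomials. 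The factor $e^{iP_{1}(x)}$ can be pulled outside the integral, while $e^{iP_{2}(y)}$ is absorbed into $f$ without changing the $L^{q}$ norm, so one may assume $P=R$. Isolating a leading mixed monomial $c\,x^{\alpha}y^{\beta}$ of $R$ and using a linear change of variables to normalize $|c|=1$, a Calder\'on--Zygmund decomposition combined with integration by parts in the oscillating phase reduces the estimate, after rescaling, to the same type of operator but with a polynomial of strictly smaller degree, to which the inductive hypothesis applies.

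Second, to upgrade to $L^{p}(w)$ with $w\in A_{p}$, I would invoke the Stein--Weiss interpolation with change of measures. By the Jones factorization, any $w\in A_{p}$ may be written as $w=w_{0}\,w_{1}^{1-p}$ with $w_{0},w_{1}\in A_{1}$. Choosing exponents $q_{0},q_{1}$ straddling $p$, the unweighted $L^{q_{0}}$ and $L^{q_{1}}$ bounds for $S$ from the first step serve as the two endpoints, and the change-of-measure data are built from $w_{0}$ and $w_{1}$. The reverse H\"older inequality from Lemma 2.1(3) provides the slack needed to identify the interpolated weight with $w$, so the resulting bound is exactly $\|Sf\|_{L^{p}(w)}\leq C\|f\|_{L^{p}(w)}$. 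Since each endpoint constant depends only on $d$, so does the interpolated one.

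The main obstacle is securing uniformity of the constants in the inductive step of the unweighted estimate: when rescaling to normalize the leading mixed coefficient, the change of variables must preserve the class of polynomials of the given total degree, and the resulting induction hypothesis must be applied to a polynomial of strictly lower degree without incurring a coefficient-dependent factor in the bound. This combinatorial bookkeeping in the phase is the core of the Ricci--Stein argument, and once it is carried out uniformly, the interpolation half of the proof is essentially routine functional analysis.
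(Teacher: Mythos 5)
The paper does not prove this lemma; it is quoted from Lu--Zhang \cite{LZ}, with the remark that the proof there uses interpolation with change of measures \cite{SW}. Your first stage (unweighted $L^{q}$ bounds, uniform in the coefficients, by the Ricci--Stein induction on the total degree of $P$) is the right skeleton for the unweighted theory. The problem is your second stage: as written, it cannot produce a weighted estimate. The Stein--Weiss theorem interpolates between two \emph{weighted} endpoint estimates $T\colon L^{p_i}(u_i)\to L^{p_i}(v_i)$ and yields boundedness with respect to the geometric-mean weights $u_0^{(1-\theta)p/p_0}u_1^{\theta p/p_1}$; if both endpoints are unweighted, every interpolated weight is again $\equiv 1$. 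You cannot ``build the change-of-measure data from $w_0$ and $w_1$'' of the Jones factorization when neither endpoint inequality involves $w_0$ or $w_1$ --- the factorization supplies no estimate, only an algebraic identity. Nor can you fall back on the standard weighted Calder\'on--Zygmund machinery applied to the kernel $e^{iP(x,y)}K(x,y)$: its gradient in $x$ picks up $\nabla_xP$, so the H\"ormander-type regularity constant depends on the coefficients of $P$, which is exactly the uniformity the lemma forbids losing.

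The way interpolation with change of measures is actually used here is different in structure. One decomposes $S=\sum_j S_j$ with the kernel of $S_j$ supported where $|x-y|\sim 2^{j}$ and proves, for each piece, (a) a weighted bound $\|S_jf\|_{L^p(w^{1+\eta})}\le C\|f\|_{L^p(w^{1+\eta})}$ with \emph{no} decay, using only the size bound $(1.8)$ and the $A_p$ theory (here the reverse H\"older inequality guarantees $w^{1+\eta}\in A_p$ for small $\eta$), and (b) an unweighted bound $\|S_jf\|_{L^2}\le C2^{-j\varepsilon}\|f\|_{L^2}$ \emph{with} geometric decay, coming from the oscillation of $e^{iP(x,y)}$ (this is where the induction on the degree and the van der Corput-type estimates live). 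Interpolating (a) against (b) with change of measures trades some of the decay for the weight and gives $\|S_jf\|_{L^p(w)}\le C2^{-j\varepsilon'}\|f\|_{L^p(w)}$, after which one sums the geometric series. Your sketch is missing both essential inputs to this step: a weighted endpoint and a decaying endpoint. Without them the interpolation is vacuous and the weighted conclusion does not follow.
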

\begin{lemma} \,\,\cite{SFL}, \cite{KS} Let $w\in A_{(p,q)}$ , $0<\alpha<n$, $1< p<\frac{n}{\alpha}$ and $\frac{1}{p}-\frac{1}{q}=\frac{\alpha}{n}$. Then there exists constant $C>0$ independent of $P$ such that
such that
$$\|S_{\alpha}f\|_{L^{q}(w^{q})}\leq C\|f\|_{L^{p}(w^{p})},$$
and
$$
\|I_{\alpha}f(x)\|_{M_{q,\frac{qk}{p}}(w^{q})}\leq C\|f\|_{M_{p,k}(w^{p},w^{q})}.
$$
When $p=1$ and $w\in A_{(1,q)}$ with $q=\frac{n}{n-\alpha}$, then for all $\lambda>0,$ there exists constant $C>0$ such that
$$
 w^{q}\left(\left\{x\in B:|I_{\alpha}f(x)|>\lambda\right\}\right)\leq \frac{C}{\lambda^{q}}\|f\|_{M_{1,k}(w,w^{q})}^{q}(w^{q}(B))^{kq}.
$$
\end{lemma}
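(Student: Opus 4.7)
My plan is to establish the three assertions of Lemma 2.4 in turn, treating the unweighted $L^{p}\to L^{q}$ bound for $S_\alpha$ due to Ricci-Stein \cite{RS} and the classical weighted Lebesgue bounds for $I_\alpha$ as black boxes. For the first estimate $\|S_{\alpha}f\|_{L^{q}(w^{q})}\leq C\|f\|_{L^{p}(w^{p})}$, I would adapt the Lu-Zhang strategy \cite{LZ} from the non-fractional setting: combine the unweighted Ricci-Stein bound with a heavily weighted endpoint constructed from the reverse H\"older inequality (Lemma 2.1(3)), and then interpolate via the Stein-Weiss theorem with change of measures \cite{SW}. Because both inputs are uniform in the coefficients of $P$, the resulting two-weight inequality inherits the same invariance.

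The Morrey estimate for $I_\alpha$ follows the Komori-Shirai splitting from \cite{KS}. Fix $B=B(x_{0},r)$, write $f=f_{1}+f_{2}$ with $f_{1}=f\chi_{2B}$, and treat the two pieces separately. For the local term, the $L^{p}(w^{p})\to L^{q}(w^{q})$ boundedness of $I_\alpha$ together with the Morrey norm definition yields
\[
\Bigl(\int_{B}|I_\alpha f_{1}|^{q}w^{q}\,dx\Bigr)^{1/q}
\leq C\Bigl(\int_{2B}|f|^{p}w^{p}\,dx\Bigr)^{1/p}
\leq C\,w^{q}(2B)^{k/p}\|f\|_{M_{p,k}(w^{p},w^{q})},
\]
and the doubling property (Lemma 2.1(1)) converts $w^{q}(2B)$ into $w^{q}(B)$. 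For $x\in B$ the far piece obeys the standard annular bound $|I_\alpha f_{2}(x)|\leq C\sum_{j\geq 1}(2^{j}r)^{\alpha-n}\int_{2^{j+1}B}|f(y)|\,dy$; H\"older's inequality combined with the $A_{(p,q)}$ condition collapses the $j$-th term, using $\alpha/n=1/p-1/q$, to a constant multiple of $w^{q}(2^{j+1}B)^{k/p-1/q}\|f\|_{M_{p,k}(w^{p},w^{q})}$. Since the exponent $k/p-1/q$ is negative by hypothesis, the reverse doubling inequality (Lemma 2.1(2)) makes this a convergent geometric series summing to $Cw^{q}(B)^{k/p-1/q}\|f\|_{M_{p,k}(w^{p},w^{q})}$, which upon integration against $w^{q}$ over $B$ produces the desired $M_{q,qk/p}(w^{q})$ bound.

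For the weak-type assertion at $p=1$, the argument is entirely parallel: after the same splitting, Chebyshev together with the classical weak $(1,q)$ bound for $I_\alpha$ controls the local contribution, while the far piece is estimated pointwise by the same annular decomposition with $p=1$, and the resulting $L^\infty(B)$ bound is absorbed into the distributional set. The main obstacle is the first assertion: the oscillatory factor $e^{iP(x,y)}$ blocks any direct Calder\'on-Zygmund argument, so the proof for $I_\alpha$ cannot be mimicked. The interpolation must be arranged so that the auxiliary endpoints accommodate the $A_{(p,q)}$ class while keeping the constant insensitive to the coefficients of $P$. Once that weighted Lebesgue bound is secured, the Morrey and weak-type extensions are routine applications of the Komori-Shirai machinery.
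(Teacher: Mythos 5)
This is a cited result (Lemma~2.4 points to \cite{SFL} and \cite{KS}), so the paper supplies no proof of its own to compare against; your proposal is a reconstruction of the arguments in those references, and in outline it is sound. The Morrey estimate and the weak-type estimate for $I_{\alpha}$ are handled exactly as in Komori--Shirai: the local/far splitting, the H\"older-plus-$A_{(p,q)}$ collapse of the annular terms (the exponents do cancel, since $|2^{j+1}B|^{1/p'+1/q}\approx (2^{j}r)^{n-\alpha}$), and the reverse-doubling summation using $k/p-1/q<0$ are all correct. Where your proposal goes astray is in identifying the ``main obstacle.'' For the first assertion you propose the Lu--Zhang interpolation-with-change-of-measures machinery and assert that the oscillatory factor blocks any argument that mimics the one for $I_{\alpha}$. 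That is true when $\alpha=0$ (this is precisely why Lemma~2.3 requires the $L^{2}$ hypothesis on $\widetilde{S}$ and the Stein--Weiss interpolation), but it is false for $0<\alpha<n$: the size condition (1.12) gives the pointwise domination $|S_{\alpha}f(x)|\leq C\,I_{\alpha}(|f|)(x)$, so the weighted bound $\|S_{\alpha}f\|_{L^{q}(w^{q})}\leq C\|f\|_{L^{p}(w^{p})}$ is an immediate corollary of the Muckenhoupt--Wheeden theorem for $I_{\alpha}$, with a constant that is trivially independent of $P$ because the oscillation is simply discarded. The paper itself exploits exactly this domination two lines later to deduce Theorem~1.3 from Lemma~2.4. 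So your route for the first inequality is not wrong, but it is far heavier than what is needed, and the difficulty you flag as the crux of the lemma is in fact absent in the fractional setting.
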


We first give the proof of Theorem 1.1.
 Decompose $f=f\chi_{2B}+f_{\chi_{(2B)^{c}}}:=f_{1}+f_{2}.$ For any given $\lambda>0$, we write
\begin{align*}
w\left(\{x\in B: |Tf(x)|>\lambda\}\right)&\leq w\left(\left\{x\in B: |Tf_{1}(x)|>\frac{\lambda}{2}\right\}\right)+w\left(\left\{x\in B: |Tf_{2}(x)|>\frac{\lambda}{2}\right\}\right)\\
&:= I+II.
\end{align*}
An application of (2.1) and Lemma 2.2 yields that
$$
I\leq \left(w\left\{x\in \mathbb{R}^{n}: |Tf_{1}(x)|>\frac{\lambda}{2}\right\}\right)\leq \frac{C}{\lambda}\int_{2B}|f(x)|w(x)dx\leq \frac{C}{\lambda}\|f\|_{M_{1,k}}(w)w(B)^{k}.\eqno(2.7)
$$

Next we turn to deal with the term $II.$ An elementary estimate shows
\begin{align*}
w\left(\left\{x\in B: |Tf_{2}(x)|>\frac{\lambda}{2}\right\}\right)&=\int_{\left\{x\in B: |Tf_{2}(x)|>\frac{\lambda}{2}\right\}}w(x)dx\\
 &\leq \frac{C}{\lambda}\int_{\left\{x\in B: |Tf(x)|>\frac{\lambda}{2}\right\}}|Tf_{2}(x)|w(x)dx.
\end{align*}
We note that for $x\in B$ and $y\in (2B)^{c}$, $|x_{0}-y|<C|x-y|$. Applying $(1.3)$, we conclude that
\begin{align*}
|Tf_{2}(x)|&=|\int_{(2B)^{c}}e^{iP(x,y)}K(x-y)f_{2}(y)dy|\\
&\leq C\int_{\mathbb{R}^{n}}\frac{|f_{2}(y)|}{|x-y|^{n}}dy\\
&\leq C\int_{|x_{0}-y|>2r}\frac{|f_{2}(y)|}{|x_{0}-y|^{n}}dy\\
&\leq C\sum_{j=1}^{\infty}\int_{2^{j}r<|x_{0}-y|<2^{j+1}r}\frac{|f(y)|}{|x_{0}-y|^{n}}dy\\
&\leq C\sum_{j=1}^{\infty}\frac{1}{|2^{j}B|}\int_{2^{j+1}B}|f(y)|dy.
\end{align*}
H\"{o}lder inequality and the $A_{p}$ condition imply that
$$
II\leq \frac{C}{\lambda}\sum_{j=1}^{\infty}\frac{1}{|2^{j}B|}\int_{2^{j+1}B}|f(y)|dy\int_{\left\{x\in B: |Tf(x)|>\frac{\lambda}{2}\right\}}w(x)dx
\leq \frac{C}{\lambda}\|f\|_{M_{1,k}}(w)w(B)^{k}.\eqno(2.8)
$$
Then, Theorem 1.1 is a by-product of (2.7)-(2.8).

We now give the proof of Theorem 1.2.
As in \cite{KS}, our method is adapted from \cite{FLY} in the case of Lebesgue measure. Let $1<p<\infty, 0<k<1$, $K$ be a SCZK. It suffices to show that
$$
\frac{1}{w(B)^{k}}\int_{B}|Sf(x)|^{p}w(x)dx\leq C\|f\|_{M_{p,k}(w)}^{p}.\eqno(2.9)
$$
For a fixed ball $B=B(x_{0},r)$, we decompose $f=f\chi_{2B}+f_{\chi_{(2B)^{c}}}:=f_{1}+f_{2}$ and consider the corresponding splitting
\begin{align*}
Sf(x)&=\int_{2B}e^{iP(x,y)}K(x,y)f(y)dy+\int_{(2B)^{c}}e^{iP(x,y)}K(x,y)f(y)dy\\
&=:Sf_{1}(x)+Sf_{2}(x).
\end{align*}

Since $S$ is a linear operator, so we get
$$
\frac{1}{w(B)^{k}}\int_{B}|Sf(x)|^{p}w(x)dx\leq \frac{1}{w(B)^{k}}\int_{B}(|Sf_{1}(x)|^{p}+|Sf_{2}(x)|^{p})w(x)dx:=I+II.\eqno(2.10)
$$
It follows from Lemma 2.3 and (2.1) that
$$
I\leq \frac{1}{w(B)^{k}}\int_{\mathbb{R}^{n}}|Sf_{1}(x)|^{p}w(x)dx\leq \frac{C}{w(B)^{k}}\int_{2B}|f(x)|^{p}w(x)dx
\leq C\|f\|_{M_{p,k}(w)}^{p}.\eqno(2.11)
$$

We are now in a position to estimate the term $II$. We note that for $x\in B$ and $y\in (2B)^{c}$, $|x_{0}-y|<C|x-y|$. Applying $(1.8)$, we conclude that
\begin{align*}
|Sf_{2}(x)|&=|\int_{(2B)^{c}}e^{iP(x,y)}K(x,y)f_{2}(y)dy|\\
&\leq C\int_{\mathbb{R}^{n}}\frac{|f_{2}(y)|}{|x-y|^{n}}dy\\
&\leq C\int_{|x_{0}-y|>2r}\frac{|f_{2}(y)|}{|x_{0}-y|^{n}}dy\\
&\leq C\sum_{j=1}^{\infty}\int_{2^{j}r<|x_{0}-y|<2^{j+1}r}\frac{|f(y)|}{|x_{0}-y|^{n}}dy\\
&\leq C\sum_{j=1}^{\infty}\frac{1}{|2^{j}B|}\int_{2^{j+1}B}|f(y)|dy.
\end{align*}
H\"{o}lder inequality and the $A_{p}$ condition imply that
\begin{align*}
\int_{2^{j+1}B}|f(y)|dy&\leq\left(\int_{2^{j+1}B}|f(y)|^{p}w(y)dy\right)^{\frac{1}{p}}\left(\int_{2^{j+1}B}w(y)^{-\frac{p'}{p}}dy\right)^{\frac{1}{p'}}\\
&\leq C\|f\|_{M_{p,k}(w)}w(2^{j+1}B)^{\frac{k}{p}}\left(\int_{2^{j+1}B}w(y)^{1-p'}dy\right)^{1-\frac{1}{p'}}\\
&\leq C\|f\|_{M_{p,k}(w)}|2^{j+1}B|w(2^{j+1}B)^{\frac{1}{p}(k-1)}.
\end{align*}
Then, using (2.2) we obtain
$$
II\leq  C\|f\|_{M_{p,k}(w)}^{p}\left(\sum_{j=1}^{\infty}\frac{w(B)^{\frac{1-k}{p}}}{w(2^{j+1}B)^{\frac{1-k}{p}}}\right)^{p}
\leq C\|f\|_{M_{p,k}(w)}^{p}.\eqno(2.12)
$$
We get (2.9) by (2.10), (2.11) and (2.12).
The proof of Theorem 1.2 is completed.

For the last part of this section, we show the proof of Theorem 1.3.
 Theorem 1.3 is a byproduct of Lemma 2.4 and the following observation
$$
\left| S_{\alpha}f(x)\right|\leq \int\frac{|f(y)|}{|x-y|^{n-\alpha}}dy=I_{\alpha}(|f|)(x).
$$
In fact, if $1<p<\infty$,
\begin{eqnarray*}
\frac{1}{w(B)^{\frac{qk}{p}}}\int|S_{\alpha}f(x)|^{q}w^{q}
&\leq& \frac{1}{w(B)^{\frac{qk}{p}}}\int|I_{\alpha}f(x)|^{q}w^{q}\\
&\leq& C\|f\|_{M_{p,k}(w^{p},w_{q})}.
\end{eqnarray*}

In the same manner, we can obtain the result for $p=1.$

\section{Weighted estimates for the commutators}

The aim of this section is to set up the weighted boundedness for the commutators formed by $S(S_{\alpha})$ and $BMO(\mathbb{R}^{n})$ functions.

A locally integrable function $b$ is said to be in $BMO(\mathbb{R}^{n})$ if for any ball $B\subset \mathbb{R}^{n}$
$$\|b\|_{BMO(\mathbb{R}^{n})}=\sup_{B}\frac{1}{|B|}\int_{B}|b(x)-b_{B}|dx<\infty,$$
where $b_{B}=\frac{1}{|B|}\int_{B}b(x)dx.$

We next formulate some remarks about $BMO(\mathbb{R}^{n}).$
\begin{lemma}\,\,\,\, \cite{L1},\cite{T}
Let $1\leq p<\infty$, $b\in BMO(\mathbb{R}^{n})$. Then for any ball $B\subset \mathbb{R}^{n}$, the following statements are true

$\mathrm{(1)}$\,\, There exist constants $C_{1}$, $C_{2}$ such that for all $\alpha>0$
$$\left|\{x\in B:|b(x)-b_{B}|>\alpha\}\right|\leq C_{1}|B|e^{-C_{2}\alpha/\|b\|_{BMO(\mathbb{R}^{n})}}.\eqno(3.1)$$
The inequality $(3.1)$ is also called John-Nirenberg inequality.

$\mathrm{(2)}$\,\,
$$|b_{2^{\lambda}B}-b_{B}|\leq2^{n}\lambda \|b\|_{BMO(\mathbb{R}^{n})}.\eqno(3.2)$$
\end{lemma}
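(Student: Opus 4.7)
The plan is to establish the two statements of Lemma 3.1 in order, since (2) is a short direct estimate while (1) requires a Calder\'on--Zygmund stopping-time iteration. For both, I normalize $\|b\|_{BMO(\mathbb{R}^{n})}=1$, since the general case follows by scaling.

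For statement (2), I would proceed by induction on $\lambda$ (interpreted as a positive integer). The base case $\lambda=1$ follows from
\[
|b_{2B}-b_{B}|=\Bigl|\frac{1}{|B|}\int_{B}(b(y)-b_{2B})\,dy\Bigr|
\leq \frac{2^{n}}{|2B|}\int_{2B}|b(y)-b_{2B}|\,dy
\leq 2^{n}\|b\|_{BMO(\mathbb{R}^{n})}.
\]
Applying the same estimate with $B$ replaced by $2^{\lambda}B$ and then using the triangle inequality
$|b_{2^{\lambda+1}B}-b_{B}|\leq |b_{2^{\lambda+1}B}-b_{2^{\lambda}B}|+|b_{2^{\lambda}B}-b_{B}|$
gives the inductive step immediately, yielding the bound $2^{n}\lambda\|b\|_{BMO(\mathbb{R}^{n})}$.

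For statement (1), the John--Nirenberg inequality, my plan is the classical Calder\'on--Zygmund stopping-time argument adapted to the ball $B$. Fix a level $t>1$ (I expect $t=2\cdot 2^{n}$ to be convenient). Starting from $B$, apply a dyadic Calder\'on--Zygmund decomposition to $|b-b_{B}|$ at height $t$ relative to $B$, producing a disjoint family of maximal subcubes (or sub-balls via a Whitney-type selection) $\{B_{j}^{1}\}$ on which the average of $|b-b_{B}|$ exceeds $t$, but on whose parents the average is $\leq t$. Standard properties give $\sum_{j}|B_{j}^{1}|\leq t^{-1}\int_{B}|b-b_{B}|\leq |B|/t$, while outside $\bigcup_{j}B_{j}^{1}$ we have $|b-b_{B}|\leq t$ a.e., and the parent control combined with the doubling $|B_{j}^{1}|\sim |\widehat{B}_{j}^{1}|$ yields $|b_{B_{j}^{1}}-b_{B}|\leq 2t$. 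Iterating the same stopping-time decomposition inside each $B_{j}^{1}$ at level $t$ relative to $b_{B_{j}^{1}}$ yields, after $k$ steps, a family of sub-balls $\{B_{j}^{k}\}$ with $\sum_{j}|B_{j}^{k}|\leq t^{-k}|B|$ and $|b_{B_{j}^{k}}-b_{B}|\leq 2kt$, while $|b-b_{B}|\leq 2kt+t\leq (2k+1)t$ off the bad set at stage $k$. Given $\alpha>0$, choose the unique $k$ with $(2k+1)t\leq \alpha<(2k+3)t$; then $\{x\in B:|b(x)-b_{B}|>\alpha\}$ is contained in the stage-$k$ bad set, so its measure is at most $t^{-k}|B|\leq C_{1}|B|e^{-C_{2}\alpha}$ with $C_{2}=(\log t)/(2t)$.

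The main obstacle is the stopping-time argument in (1): one must verify the parent-average control $|b_{B_{j}^{1}}-b_{B}|\leq 2t$ cleanly in the ball setting (rather than the dyadic cube setting where it is textbook), which requires either a Besicovitch/Whitney replacement for the dyadic grid or a reduction of the BMO norm on balls to a comparable one on cubes. Once that is handled, the geometric shrinking of the bad sets and the linear growth of the average drift combine to give the exponential decay by the simple choice of $k$ above. Statement (2) presents no obstacle.
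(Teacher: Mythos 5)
Your proposal is correct and is essentially the standard argument: the paper does not prove Lemma 3.1 but cites it to \cite{L1} and \cite{T}, and the proofs there are exactly the telescoping estimate you give for (3.2) and the Calder\'on--Zygmund stopping-time iteration you outline for the John--Nirenberg inequality (3.1). The only caveats are cosmetic --- the parent-average control yields $2^{n}t$ rather than $2t$ in dimension $n$, and the ball-versus-cube issue you flag is most cleanly resolved by the equivalence of the ball-based and cube-based $BMO$ norms --- neither of which affects the unspecified constants $C_{1},C_{2}$.
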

\begin{lemma} \,\,\,\, \cite{MW} Let $w\in A_{\infty}$. Then the following statements are equivalent

$\mathrm{(1)}$\,\,
$$\|b\|_{BMO(\mathbb{R}^{n})}\sim \sup_{B}\left(\frac{1}{|B|}\int_{B}|b(x)-b_{B}|^{p}dx\right)^{\frac{1}{p}}.\eqno(3.3)$$

$\mathrm{(2)}$\,\,
$$\|b\|_{BMO(\mathbb{R}^{n})}\sim \sup_{B}\inf_{a\in \mathbb{R}}\frac{1}{|B|}\int_{B}|b(x)-a|dx.\eqno(3.4)$$

$\mathrm{(3)}$\,\,
$$
\|b\|_{BMO(w)}=\sup_{B}\frac{1}{w(B)}\int_{B}|b(x)-b_{B,w}|w(x)dx.\eqno(3.5)
$$
where $BMO(w)=\{b:\|b\|_{BMO(w)}<\infty\}$ and $b_{B,w}=\frac{1}{w(B)}\int_{B}b(y)w(y)dy.$
\end{lemma}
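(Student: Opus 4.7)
The plan is to handle the three equivalent forms in the order (2), (1), (3), building up to the genuinely weight-involving one last. Statement (2) is immediate and weight-free: setting $a = b_B$ yields one direction, and the reverse uses the triangle inequality together with $|a - b_B| \leq |B|^{-1}\int_B |b(x) - a|\,dx$, costing only a factor of $2$. For (1), the direction ``$\geq$'' is Jensen's inequality; for ``$\leq$'', I would plug John--Nirenberg (Lemma 3.1(1)) into the layer-cake formula,
\[
\frac{1}{|B|}\int_B |b - b_B|^p\,dx
= \frac{p}{|B|}\int_0^\infty \alpha^{p-1}\bigl|\{x\in B : |b(x) - b_B|>\alpha\}\bigr|\,d\alpha
\leq C_1 p \int_0^\infty \alpha^{p-1}e^{-C_2\alpha/\|b\|_{BMO}}\,d\alpha,
\]
the last integral being a gamma integral equal to $C(p)\|b\|_{BMO}^p$.

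The substantive content is (3), namely $\|b\|_{BMO(w)} \sim \|b\|_{BMO}$ for $w \in A_\infty$. For the direction $\|b\|_{BMO(w)} \leq C\|b\|_{BMO}$, I would use (2) to replace $b_{B,w}$ by $b_B$ at the cost of a factor of $2$, then invoke the reverse H\"older inequality of Lemma 2.1(3) to obtain $r > 1$ with $(\frac{1}{|B|}\int_B w^r)^{1/r} \leq C w(B)/|B|$; H\"older at $(r', r)$ combined with (1) at exponent $r'$ gives
\[
\int_B |b - b_B|\,w\,dx
\leq \Bigl(\int_B |b - b_B|^{r'}dx\Bigr)^{1/r'}\Bigl(\int_B w^r dx\Bigr)^{1/r}
\leq C\|b\|_{BMO}\,w(B).
\]

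The reverse inequality $\|b\|_{BMO} \leq C\|b\|_{BMO(w)}$ is the main obstacle. Here I would first bootstrap a weighted John--Nirenberg inequality by running the standard Calder\'on--Zygmund stopping-time argument with respect to $w\,dx$ (which is doubling by Lemma 2.1(1)), obtaining $w(\{x \in B : |b(x) - b_{B,w}|>\alpha\}) \leq C_1 w(B)\, e^{-C_2 \alpha/\|b\|_{BMO(w)}}$. I would then transfer this from $w$-measure to Lebesgue measure through a \emph{reverse} $A_\infty$ estimate of the form $|E|/|B| \leq C(w(E)/w(B))^{\delta'}$ for $E \subset B$, which is produced from the $A_p$ condition applied to $\chi_E$ via H\"older (choosing any $p$ with $w \in A_p$, available by Lemma 2.1(6)). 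The layer cake then yields $\frac{1}{|B|}\int_B |b - b_{B,w}|\,dx \leq C\|b\|_{BMO(w)}$, and (2) finishes the argument by converting this left-hand side to $\|b\|_{BMO}$ up to a harmless constant.
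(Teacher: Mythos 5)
The paper does not actually prove this lemma: it is quoted verbatim from Muckenhoupt--Wheeden \cite{MW}, so there is no internal argument to compare against. Judged on its own, your proof is correct and is essentially the classical route. Parts (1) and (2) are handled exactly as one should: the factor-of-$2$ comparison with the infimum, Jensen for one direction of (1), and John--Nirenberg plus the layer-cake/gamma-integral computation for the other. For (3), the easy inclusion $\|b\|_{BMO(w)}\leq C\|b\|_{BMO}$ via reverse H\"older and H\"older at exponents $(r',r)$ is clean (one small wording issue: you invoke ``(2)'' to replace $b_{B,w}$ by $b_B$ inside a $w\,dx$-average, but what you really use is the same two-line infimum argument run with respect to the measure $w\,dx$ rather than statement (2) itself --- the mathematics is fine). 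The reverse inclusion is where the real content sits, and your structure is right: a weighted John--Nirenberg inequality for the doubling measure $w\,dx$, transferred back to Lebesgue measure through the reverse $A_\infty$ estimate $|E|/|B|\leq C\bigl(w(E)/w(B)\bigr)^{1/p}$, which you correctly note follows from the $A_p$ condition applied with H\"older to $\chi_E$. It is worth making explicit why the exponential decay cannot be dispensed with: Chebyshev alone gives only $w(E_\alpha)/w(B)\lesssim \|b\|_{BMO(w)}/\alpha$, and since the transfer exponent $\delta'=1/p<1$ the resulting layer-cake integral $\int^\infty \alpha^{-\delta'}\,d\alpha$ diverges, so the stopping-time self-improvement is genuinely needed. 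That step --- John--Nirenberg for a general doubling measure --- is standard (dyadic Calder\'on--Zygmund decomposition relative to $w\,dx$, with doubling used to pass from a stopping cube to its parent) but is only sketched in your writeup; it is the one place a referee would ask for detail or an explicit citation, and it is essentially where the content of \cite{MW} lives.
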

For a locally integrable function $b$, the commutator formed by $S(S_{\alpha})$ and $b$ are defined by
$$
S_{b}:=[b, S]f(x)=b(x)Sf(x)-S(bf)(x)
$$
and
$$
S_{\alpha, b}:=[b, S_{\alpha}]f(x)=b(x)S_{\alpha}f(x)-S_{\alpha}(bf)(x).
$$
Our main results of this section are
\begin{theorem} Let $p, k, w, K$ and $\widetilde{S}$ be the same as Theorem $1.2$. If $b\in BMO(\mathbb{R}^{n})$, then for any real polynomial $P(x,y)$, there exists constant $C>0$ such that
$$
\|S_{b}f(x)\|_{M_{p,k}(w)}\leq C\|f\|_{M_{p,k}(w)},
$$
where its norm depends only on the total degree of $P$, but not on the coefficients of $P$.
\end{theorem}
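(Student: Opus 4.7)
The strategy parallels the proof of Theorem 1.2, with the additional ingredient of the standard commutator identity $S_b g(x) = (b(x) - b_B) Sg(x) - S((b - b_B)g)(x)$, valid for any constant $b_B$. Fix a ball $B = B(x_0, r)$, decompose $f = f\chi_{2B} + f\chi_{(2B)^c} =: f_1 + f_2$, and by linearity $S_b f = S_b f_1 + S_b f_2$, which reduces matters to bounding
$$\frac{1}{w(B)^k} \int_B |S_b f|^p w\,dx \leq \frac{C}{w(B)^k} \int_B |S_b f_1|^p w\,dx + \frac{C}{w(B)^k} \int_B |S_b f_2|^p w\,dx =: I + II.$$

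For the local term $I$, I would invoke the $L^p(w)$-boundedness of $S_b$ for $w \in A_p$, $1<p<\infty$, $b \in BMO(\mathbb{R}^n)$. This is the BMO-commutator analog of Lemma 2.3 and follows either from the interpolation-with-change-of-measures scheme of \cite{SW} used in \cite{LZ}, or from a general abstract commutator theorem for operators bounded on $L^p(w)$ for every $w \in A_p$. Combined with the doubling property (2.1), this gives
$$I \leq \frac{C}{w(B)^k} \int_{2B} |f|^p w\,dx \leq C \|b\|_{BMO(\mathbb{R}^n)}^p \|f\|_{M_{p,k}(w)}^p.$$

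For the non-local term $II$, use (1.8) together with $|x_0 - y| \sim |x - y|$ for $x \in B$, $y \in (2B)^c$, to obtain the pointwise bound
$$|S_b f_2(x)| \leq C|b(x) - b_B| \sum_{j=1}^\infty \frac{1}{|2^j B|} \int_{2^{j+1}B} |f|\,dy + C \sum_{j=1}^\infty \frac{1}{|2^j B|} \int_{2^{j+1}B} |b - b_B|\,|f|\,dy,$$
exactly as in the proof of Theorem 1.2 but with an extra weight $|b - b_B|$. Writing $b - b_B = (b - b_{2^{j+1}B}) + (b_{2^{j+1}B} - b_B)$, control the mean-oscillation jump by $C(j+1)\|b\|_{BMO(\mathbb{R}^n)}$ via (3.2), and apply Hölder's inequality with exponents $p, p'$ to each annular integral. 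The weighted BMO equivalence of Lemma 3.2, applied with $v = w^{1-p'} \in A_{p'} \subset A_\infty$ (via (2.6)), bounds $\bigl(\int_{2^{j+1}B} |b - b_{2^{j+1}B}|^{p'} w^{1-p'}\bigr)^{1/p'}$ by $C\|b\|_{BMO(\mathbb{R}^n)} w^{1-p'}(2^{j+1}B)^{1/p'}$; the $A_p$ condition then combines this with $w(2^{j+1}B)^{1/p}$ into $C|2^{j+1}B|$, producing the per-annulus estimate
$$\frac{1}{|2^{j+1}B|} \int_{2^{j+1}B} |b - b_B|\,|f|\,dy \leq C(j+1)\|b\|_{BMO(\mathbb{R}^n)} \|f\|_{M_{p,k}(w)}\, w(2^{j+1}B)^{(k-1)/p}.$$

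Integrating $|S_b f_2|^p w$ over $B$ and using $\int_B |b - b_B|^p w \, dx \leq C\|b\|_{BMO(\mathbb{R}^n)}^p w(B)$ (again from Lemma 3.2) reduces everything to the convergence of
$$\sum_{j=1}^\infty (j+1)\left(\frac{w(B)}{w(2^{j+1}B)}\right)^{(1-k)/p}.$$
By the reverse doubling condition (2.2), $w(2^{j+1}B) \geq c^{j+1} w(B)$ for some $c > 1$, so this series is dominated by $\sum_j (j+1)\rho^{j+1}$ with $\rho = c^{-(1-k)/p} < 1$, hence is finite. The \emph{main obstacle} is precisely this extra polynomial factor $(j+1)$, absent in the proof of Theorem 1.2, arising from $|b_{2^{j+1}B} - b_B| \lesssim (j+1)\|b\|_{BMO(\mathbb{R}^n)}$ across dyadic annuli; it is absorbed by the geometric decay produced by reverse doubling, and the hypothesis $k < 1$ is what ensures the exponent $(1-k)/p$ is strictly positive so that this absorption succeeds.
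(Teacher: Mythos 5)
Your proposal is correct and follows essentially the same route as the paper's proof: the same $f_1+f_2$ splitting, the weighted $L^p$ bound for $S_b$ (the paper's Lemma 3.5) for the local piece, the decomposition $b(x)-b(y)$ around an average of $b$ on $B$ with the $(j+1)\|b\|_{BMO(\mathbb{R}^{n})}$ jump from (3.2) absorbed by reverse doubling for the tail. The only (harmless) difference is that you center at the unweighted averages $b_B$ and $b_{2^{j+1}B}$ where the paper uses the weighted averages $b_{B,w}$ and $b_{2^{j+1}B,w^{1-p'}}$ in its Proposition 3.7; both reduce to the weighted John--Nirenberg inequality for $w,\,w^{1-p'}\in A_\infty$.
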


\begin{theorem} Let $p, q,$ $k,$ $K_{\alpha}$ and $w$ be the same as in Theorem $1.3$. Then for $b\in BMO(\mathbb{R}^{n})$, there exists constant $C>0$ such that
$$
\|S_{\alpha,b}f(x)\|_{M_{q,\frac{qk}{p}}(w^{q})}\leq C\|f\|_{M_{p,k}(w^{p},w^{q})}.
$$
\end{theorem}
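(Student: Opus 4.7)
The plan is to mirror the proof of Theorem~1.3 and reduce the oscillatory commutator to a non-oscillatory one through the pointwise bound
\[
|S_{\alpha,b}f(x)|\;\le\;\int_{\mathbb{R}^n}\frac{|b(x)-b(y)|\,|f(y)|}{|x-y|^{n-\alpha}}\,dy,
\]
which follows from $|e^{iP(x,y)}|\le 1$ together with (1.12). Fixing a ball $B=B(x_0,r)$, splitting $f=f_1+f_2$ with $f_1=f\chi_{2B}$, and inserting $b_B$ on both sides of the kernel yields the further pointwise bound
\[
|S_{\alpha,b}f(x)|\;\le\;|b(x)-b_B|\,I_\alpha(|f|)(x)+I_\alpha\bigl(|b-b_B|\,|f_1|\bigr)(x)+I_\alpha\bigl(|b-b_B|\,|f_2|\bigr)(x)\;=:\;\mathrm{E}_1(x)+\mathrm{E}_2(x)+\mathrm{E}_3(x).
\]
It then suffices to dominate $\tfrac{1}{w^q(B)^{qk/p}}\int_B|\mathrm{E}_i|^q w^q$ by $C\|b\|_{BMO}^q\|f\|_{M_{p,k}(w^p,w^q)}^q$ for $i=1,2,3$.

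For $\mathrm{E}_1$ I apply H\"older's inequality with exponents $s,s'>1$, choosing $s$ via the openness of $A_{(p,q)}$ (equivalently the reverse H\"older inequality (2.3)) so that $w\in A_{(p_s,q_s)}$ for a pair with $q_s=qs$ and $1/q_s=1/p_s-\alpha/n$. The BMO factor $\bigl(\int_B|b-b_B|^{qs'}w^q\bigr)^{1/s'}$ is bounded by $C\|b\|_{BMO}^{q}w^q(B)^{1/s'}$ via the weighted John--Nirenberg argument: integrate (3.1) in level sets and invoke the $A_\infty$ decay (2.5) for $w^q$. The remaining factor $\bigl(\int_B|I_\alpha f|^{qs}w^q\bigr)^{1/s}$ is controlled by Theorem~1.3 at the perturbed parameters $(p_s,q_s,k_s)$ with $k_s$ chosen so that the resulting power of $w^q(B)$ combines with $w^q(B)^{1/s'}$ to yield $w^q(B)^{qk/p}$; H\"older's inequality then embeds $\|f\|_{M_{p,k}(w^p,w^q)}$ into $\|f\|_{M_{p_s,k_s}(w^{p_s},w^{q_s})}$. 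Term $\mathrm{E}_2$ is analogous: apply Lemma~2.4 to $(b-b_B)f_1$, then peel off $|b-b_B|$ on $2B$ by a H\"older--BMO--doubling triple.

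Term $\mathrm{E}_3$ requires only the kernel size estimate (1.12). For $x\in B$ and $y\in (2B)^c$ one has $|x-y|\sim|x_0-y|$, so
\[
\mathrm{E}_3(x)\;\le\;C\sum_{j=1}^\infty \frac{1}{|2^jB|^{1-\alpha/n}}\int_{2^{j+1}B}|b(y)-b_B|\,|f(y)|\,dy.
\]
Decomposing $|b(y)-b_B|\le|b(y)-b_{2^{j+1}B}|+|b_{2^{j+1}B}-b_B|$ and using (3.2) to bound the second piece by $C(j+1)\|b\|_{BMO}$, H\"older's inequality with the $A_{(p,q)}$ condition (rewritten in terms of $w^{-p'}\in A_\infty$, so that the BMO characterization (3.5) applies) dominates each annular integral by $C\|b\|_{BMO}\|f\|_{M_{p,k}(w^p,w^q)}\,|2^{j+1}B|^{1-\alpha/n}\,w^q(2^{j+1}B)^{k/p-1/q}$. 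Reverse doubling (2.2) then yields geometric decay $[w^q(B)/w^q(2^{j+1}B)]^{1/q-k/p}$, whose exponent is positive because $k<p/q$; this absorbs the factor $j+1$ and makes the $j$-sum converge. Raising to the $q$-th power, integrating over $B$, and normalizing by $w^q(B)^{qk/p}$ gives the desired estimate.

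The chief obstacle is the exponent bookkeeping in $\mathrm{E}_1$: one must choose the H\"older perturbation $s$ so that the auxiliary application of Theorem~1.3 at $(p_s,q_s,k_s)$ combines with the BMO factor to produce precisely $w^q(B)^{qk/p}$, while simultaneously ensuring $k_s<p_s/q_s$. Both constraints can be met because the class $A_{(p,q)}$ is open and the strict inequality $k<p/q$ leaves a positive margin. Once these parameters are calibrated, the rest of the argument is a routine adaptation of the techniques already developed in the proofs of Theorems~1.2 and~1.3.
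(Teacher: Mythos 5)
Your far-field term $\mathrm{E}_3$ is handled correctly and follows the standard annulus argument, but the treatment of $\mathrm{E}_1$ and $\mathrm{E}_2$ contains a genuine gap. For $\mathrm{E}_1$ you separate $|b-b_B|$ from $I_\alpha(|f|)$ by H\"older with an exponent $s>1$, which forces you to control $\bigl(\int_B|I_\alpha f|^{qs}w^q\bigr)^{1/s}$, and you propose to do this by applying the fractional Morrey bound at perturbed parameters $(p_s,q_s)$ with $p_s>p$ and then ``embedding'' $\|f\|_{M_{p,k}(w^p,w^q)}$ into $\|f\|_{M_{p_s,k_s}(w^{p_s},w^{q_s})}$. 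H\"older's inequality gives this embedding only in the opposite direction: a Morrey norm with the larger integrability exponent dominates the one with the smaller exponent, never conversely (already in the unweighted case a function with a local singularity of order $|x|^{-n/p_s}$ lies in $M_{p,k}$ but has infinite $M_{p_s,k_s}$ norm). The same defect reappears in $\mathrm{E}_2$, where peeling $|b-b_B|$ off $\int_{2B}|b-b_B|^p|f|^pw^p$ by a H\"older--BMO--doubling triple again requires integrability of $f$ above the exponent $p$, which the hypothesis $f\in M_{p,k}(w^p,w^q)$ does not supply. No choice of $s$ ``calibrates'' this away; the obstruction is structural, not bookkeeping.

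The paper sidesteps the issue entirely: it deduces Theorem 3.4 from the pointwise domination $|S_{\alpha,b}f(x)|\le C\int|b(x)-b(y)|\,|f(y)|\,|x-y|^{\alpha-n}dy$ together with Lemma 3.6, the Komori--Shirai weighted Morrey bound for the commutator $I_{\alpha,b}$, whose proof in fact controls exactly this dominating quantity. The standard repair of your argument is to keep the commutator intact on the local piece: estimate $\int_B|S_{\alpha,b}f_1|^qw^q$ by the known weighted $L^p(w^p)\to L^q(w^q)$ boundedness of the full commutator $S_{\alpha,b}$ (the fractional analogue of Lemma 3.5, from \cite{SFL}), which yields $C\|f\|^q_{M_{p,k}(w^p,w^q)}w^q(2B)^{qk/p}$ directly, and reserve the splitting $b(x)-b(y)=(b(x)-b_{B})+(b_{B}-b(y))$ for the far piece $f_2$ only, where $|b(x)-b_B|$ multiplies a quantity already bounded uniformly in $x\in B$ and can then be integrated against $w^q$ by the weighted John--Nirenberg inequality. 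With that modification your $\mathrm{E}_3$ computation carries the rest of the proof.
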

The following results will play an important role in our analysis.
\begin{lemma} \,\,\,\,\cite{Sh}     Suppose $K$ is a \emph{SCZK}, $w\in A_{p}(1<p<\infty)$ and the operator
$\widetilde{S}$ is of type $(L^{2}(\mathbb{R}^{n}), L^{2}(\mathbb{R}^{n}))$. Then for any $b\in BMO(\mathbb{R}^{n})$, there exists constants $C>0$ independent on the coefficients of $P$ such that
$$\|S_{b}f\|_{L^{p}(w)}\leq C\|f\|_{L^{p}(w)}.$$
\end{lemma}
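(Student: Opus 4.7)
The plan is to deduce the commutator bound from the $L^p(w)$ boundedness of $S$ in Lemma~2.3 via the classical Coifman-Rochberg-Weiss complex-variable device. Introduce the one-parameter family
$$ S^z f(x) = e^{zb(x)}\, S\bigl(e^{-zb}f\bigr)(x) = \int_{\mathbb{R}^n} e^{iP(x,y)}\,K(x,y)\,e^{z(b(x)-b(y))}\,f(y)\,dy,\qquad z\in\mathbb{C}, $$
so that a formal differentiation in $z$ gives $\left.\tfrac{d}{dz}S^z f\right|_{z=0}=S_b f$. Since $b\in BMO$ implies (by John-Nirenberg) local exponential integrability of $b$, the map $z\mapsto S^z f(x)$ is holomorphic on a small disk $|z|<\varepsilon_0$ for every sufficiently regular $f$, so Cauchy's integral formula on a circle $|z|=\varepsilon$ with $\varepsilon<\varepsilon_0$ yields
$$ S_b f(x) \;=\; \frac{1}{2\pi i}\oint_{|z|=\varepsilon}\frac{S^z f(x)}{z^2}\,dz. $$
Taking $L^p(w)$ norms and applying Minkowski's inequality reduces the whole argument to the uniform estimate $\sup_{|z|=\varepsilon}\|S^z f\|_{L^p(w)}\leq C\,\|f\|_{L^p(w)}$.

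For $z=\varepsilon e^{i\theta}$, a direct computation gives
$$ \|S^z f\|_{L^p(w)}^p \;=\; \int_{\mathbb{R}^n}\bigl|S(e^{-zb}f)(x)\bigr|^p\, e^{p\varepsilon(\cos\theta)\,b(x)}\,w(x)\,dx. $$
Provided the auxiliary weight $w_\theta:=w\,e^{p\varepsilon(\cos\theta) b}$ belongs to $A_p$ with a characteristic bounded uniformly in $\theta$, Lemma~2.3 applied with the weight $w_\theta$ gives
$$ \|S^z f\|_{L^p(w)}^p \;\leq\; C\int_{\mathbb{R}^n}\bigl|e^{-zb(x)}f(x)\bigr|^p w_\theta(x)\,dx \;=\; C\,\|f\|_{L^p(w)}^p, $$
because the factor $|e^{-zb(x)}|^p=e^{-p\varepsilon(\cos\theta) b(x)}$ exactly cancels the extra exponential inserted into $w_\theta$. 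Since the constant in Lemma~2.3 is independent of the coefficients of $P$, so is the one produced here for $S_b$.

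The main obstacle is thus to justify the uniform $A_p$-stability of $w\,e^{\lambda b}$ for $|\lambda|$ small. This is the only step where $b\in BMO$ enters essentially: John-Nirenberg (Lemma~3.1(1)) yields that $e^{\lambda b}$ satisfies a reverse Hölder inequality whose constants stay bounded for $|\lambda|\leq\lambda_0(\|b\|_{BMO})$, while $w\in A_p$ has its own reverse Hölder inequality from Lemma~2.1(3). Combining both via Hölder's inequality in the integrals defining the $A_p$ characteristic of $w\,e^{\lambda b}$ shows that this characteristic depends continuously on $\lambda$ at $\lambda=0$; choosing $\varepsilon$ small enough, it stays controlled on the entire circle $|z|=\varepsilon$, which closes the loop and completes the proof.
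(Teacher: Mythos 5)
This lemma is quoted in the paper from the reference [Sh] and no proof of it is reproduced here, so there is no in-paper argument to compare against; I can only assess your proposal on its own merits. Your route is the classical Coifman--Rochberg--Weiss analytic-family trick, and it is essentially sound: the identity $\left.\tfrac{d}{dz}S^{z}f\right|_{z=0}=S_{b}f$ is correct, the Cauchy-formula representation together with Minkowski's inequality correctly reduces everything to a uniform bound for $S^{z}$ on the circle $|z|=\varepsilon$, and the cancellation $|e^{-zb(x)}|^{p}\,w_{\theta}(x)=w(x)$ is exactly right. The one point you should make explicit is quantitative: Lemma~2.3 as stated only asserts, for each fixed $w\in A_{p}$, the existence of \emph{some} constant $C$. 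Your argument needs the stronger (but true, and implicit in the Lu--Zhang proof) statement that $C$ can be taken to depend only on $n$, $p$, the total degree of $P$, and the $A_{p}$ characteristic $[w]_{A_{p}}$ --- uniformly over all weights with a common bound on that characteristic --- since you apply it to the whole family $w_{\theta}=w\,e^{p\varepsilon(\cos\theta)b}$ simultaneously. Together with the standard fact (via John--Nirenberg and reverse H\"{o}lder) that $[w\,e^{\lambda b}]_{A_{p}}$ stays bounded for $|\lambda|\leq\lambda_{0}(\|b\|_{BMO},[w]_{A_{p}})$, and a density argument justifying holomorphy of $z\mapsto S^{z}f(x)$ for, say, bounded compactly supported $f$, this closes the proof. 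The payoff of your approach is that it requires nothing about the kernel beyond the weighted bound for $S$ itself, so the independence of the constant from the coefficients of $P$ is inherited automatically; a direct proof along the lines of the cited reference would instead have to redo a sharp-function or decomposition argument for the commutator kernel.
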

\begin{lemma} \,\,\,\,\cite{KS} Let $1<p<\frac{n}{\alpha},$ $0<k< \frac{p}{q},$ $0<\alpha<n$, $\frac{1}{q}=\frac{1}{p}-\frac{\alpha}{n}$ and $w\in A_{(p,q)}$. Then for $b\in BMO(\mathbb{R}^{n})$, there exists constant $C>0$ such that
$$
\|I_{\alpha,b}f(x)\|_{M_{q,\frac{qk}{p}}(w^{q})}\leq C\|f\|_{M_{p,k}(w^{p},w^{q})}.
$$
\end{lemma}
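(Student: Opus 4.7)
The plan is to follow the two-piece dyadic argument used for Theorem 1.3 and adapt it to the commutator. Fix a ball $B=B(x_0,r)$, write $f=f_1+f_2$ with $f_1=f\chi_{2B}$ and $f_2=f\chi_{(2B)^c}$, so that $I_{\alpha,b}f=I_{\alpha,b}f_1+I_{\alpha,b}f_2$. The goal then reduces to bounding, for $j=1,2$,
$$\frac{1}{w^q(B)^{qk/p}}\int_B|I_{\alpha,b}f_j(x)|^qw^q(x)\,dx\leq C\|f\|^q_{M_{p,k}(w^p,w^q)}.$$

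For the local piece I would invoke the known strong $(L^p(w^p),L^q(w^q))$ boundedness of $I_{\alpha,b}$ for $w\in A_{(p,q)}$ and $b\in BMO(\mathbb{R}^n)$, a classical Segovia--Torrea type result. Combined with the definition of $M_{p,k}(w^p,w^q)$ it yields
\begin{align*}
\Big(\int_B|I_{\alpha,b}f_1|^qw^q\Big)^{1/q}&\leq C\|b\|_{BMO}\Big(\int_{2B}|f|^pw^p\Big)^{1/p}\\
&\leq C\|b\|_{BMO}\|f\|_{M_{p,k}(w^p,w^q)}\,w^q(2B)^{k/p}.
\end{align*}
Since $w\in A_{(p,q)}$ forces $w^q\in A_\infty$, the doubling estimate $(2.1)$ passes from $w^q(2B)$ to $w^q(B)$ and the local piece is controlled.

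For the non-local piece, insert $b_B$ to write
$$I_{\alpha,b}f_2(x)=(b(x)-b_B)I_\alpha f_2(x)-I_\alpha((b-b_B)f_2)(x).$$
For $x\in B$ and $y\in(2B)^c$ one has $|x-y|\sim|x_0-y|$, so decomposing $(2B)^c$ into annuli $2^{j+1}B\setminus 2^jB$ majorizes both summands by dyadic sums of $|2^jB|^{\alpha/n-1}\int_{2^{j+1}B}|f|$ and $|2^jB|^{\alpha/n-1}\int_{2^{j+1}B}|b-b_B||f|$. Hölder's inequality in the form $|f|=(|f|w)\,w^{-1}$, the $A_{(p,q)}$ bound $\bigl(\int_{2^{j+1}B}w^{-p'}\bigr)^{1/p'}\leq C|2^{j+1}B|/w^q(2^{j+1}B)^{1/q}$, and (3.2) to move from $b_{2^{j+1}B}$ back to $b_B$, produce
$$|I_{\alpha,b}f_2(x)|\leq C\|b\|_{BMO}\bigl(|b(x)-b_B|+\|b\|_{BMO}\bigr)\|f\|_{M_{p,k}(w^p,w^q)}\sum_{j=1}^\infty(1+j)\,w^q(2^{j+1}B)^{k/p-1/q},$$
after using $\alpha/n=1/p-1/q$ to cancel the $|2^jB|^{\alpha/n}$ factor against the $A_{(p,q)}$ output.

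Taking the $L^q(w^q)$-norm over $B$, the $|b(x)-b_B|$ factor is absorbed via the weighted John--Nirenberg equivalence (3.3) applied to $w^q\in A_\infty$, which contributes $w^q(B)^{1/q}\|b\|_{BMO}$. The main obstacle is then the $j$-sum: the extra factor $j$ from (3.2) is absent in the proof of Theorem 1.3, and must be defeated by geometric decay. Invoking the reverse doubling (2.2) for $w^q$, one gets $w^q(B)/w^q(2^{j+1}B)\leq C\gamma^j$ with $\gamma\in(0,1)$, and the hypothesis $0<k<p/q$ (i.e.\ $1/q-k/p>0$) makes
$$\sum_{j=1}^\infty(1+j)\Big(\frac{w^q(B)}{w^q(2^{j+1}B)}\Big)^{1/q-k/p}<\infty.$$
Raising to the $q$-th power and dividing by $w^q(B)^{qk/p}$ gives the Morrey bound. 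The delicate point is precisely this interplay between BMO-induced linear growth and reverse-doubling decay, which is where both $A_{(p,q)}$ and the strict inequality $k<p/q$ are essential.
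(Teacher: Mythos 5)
The paper offers no proof of this lemma: it is quoted verbatim from \cite{KS} (with only the remark that it is the weighted version of Theorem 1 in \cite{KM}), so there is no internal argument to compare against. Your reconstruction is correct and follows exactly the scheme of \cite{KS}, which is also the scheme this paper replays for Theorem 3.3: split $f=f\chi_{2B}+f\chi_{(2B)^c}$, handle the local piece by the known $(L^p(w^p),L^q(w^q))$ bound for $I_{\alpha,b}$ plus doubling of $w^q$, and handle the tail by annuli, the $A_{(p,q)}$ H\"older estimate, $(3.2)$--$(3.3)$, and reverse doubling of $w^q$ to absorb the linear factor $j$ against the geometric decay guaranteed by $k<p/q$. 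The only blemish is the exponent in your displayed $A_{(p,q)}$ consequence, which should read $\bigl(\int_{2^{j+1}B}w^{-p'}\bigr)^{1/p'}\leq C\,|2^{j+1}B|^{1-\alpha/n}\,w^{q}(2^{j+1}B)^{-1/q}$ (exponent $1/p'+1/q=1-\alpha/n$, not $1$); your subsequent cancellation against $|2^{j}B|^{\alpha/n-1}$ already presupposes the corrected exponent, so the argument stands.
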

Lemma 3.6 is the weighted version of Theorem 1 in \cite{KM}.

The following Proposition is essential to the proof of Theorem 3.3.
\begin{proposition}
Let $B=B(x_{0},r)$, $0<k<1$ and $1<p<\infty$. Then the inequality
$$\left(\int_{|x_{0}-y|>2r}\frac{|f(y)|}{|x_{0}-y|^{n}}|b_{B,w}-b(y)|dy\right)^{p}w(B)^{1-k}\leq C\|f\|_{M_{p,k}(w)}^{p}\|b\|_{BMO(\mathbb{R}^{n})}^{p}.\eqno(3.6)$$
holds for every $y\in (2B)^{c}$, where $(2B)^{c}=\mathbb{R}^{n}/2B.$
\end{proposition}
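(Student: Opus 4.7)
The plan is to carry out a dyadic annular decomposition of the exterior region $\{|x_{0}-y|>2r\}$ and to control the factor $|b_{B,w}-b(y)|$ by telescoping between weighted and Lebesgue averages on each annulus. Exactly as in the proof of Theorem 1.2, the first step is to bound the kernel by its size on annuli to obtain
$$\int_{|x_{0}-y|>2r}\frac{|f(y)|}{|x_{0}-y|^{n}}|b_{B,w}-b(y)|dy\leq C\sum_{j=1}^{\infty}\frac{1}{|2^{j}B|}\int_{2^{j+1}B}|f(y)||b_{B,w}-b(y)|dy.$$

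On each annulus I would apply H\"older's inequality with the splitting $w\cdot w^{-p'/p}$, so that the $f$-factor is controlled by $\|f\|_{M_{p,k}(w)}w(2^{j+1}B)^{k/p}$ exactly as in (2.12). Setting $\sigma=w^{1-p'}$, which belongs to $A_{p'}\subset A_{\infty}$ by (2.6), it remains to estimate
$$\left(\int_{2^{j+1}B}|b_{B,w}-b(y)|^{p'}\sigma(y)dy\right)^{1/p'}.$$
I would split the integrand by inserting the $\sigma$-average $b_{2^{j+1}B,\sigma}$: the $|b(y)-b_{2^{j+1}B,\sigma}|$ piece is absorbed by the $A_{\infty}$ equivalence (3.3) applied to $\sigma$, producing the bound $C\|b\|_{BMO}\,\sigma(2^{j+1}B)^{1/p'}$. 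The constant piece $|b_{B,w}-b_{2^{j+1}B,\sigma}|$ is handled by a short telescope through Lebesgue means: a reverse-H\"older computation using (2.3) together with (3.3) transfers each weighted average to the corresponding Lebesgue one at cost $C\|b\|_{BMO}$, while (3.2) bounds the transition between scales $B$ and $2^{j+1}B$ by $C(j+1)\|b\|_{BMO}$.

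Combining these estimates and invoking the $A_{p}$ relation $w(2^{j+1}B)^{1/p}\sigma(2^{j+1}B)^{1/p'}\leq C|2^{j+1}B|$ cancels the factor $|2^{j+1}B|/|2^{j}B|\sim 1$ and leaves the bound
$$\frac{1}{|2^{j}B|}\int_{2^{j+1}B}|f(y)||b_{B,w}-b(y)|dy\leq C(j+1)\|f\|_{M_{p,k}(w)}\|b\|_{BMO}\,w(2^{j+1}B)^{(k-1)/p}.$$
The reverse doubling property (2.2) gives $w(2^{j+1}B)\geq C_{0}^{\,j}w(B)$ for some $C_{0}>1$; since $(k-1)/p<0$ this produces a geometric decay factor $C_{0}^{j(k-1)/p}$ which dominates the polynomial weight $(j+1)$. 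Summing in $j$ yields a global bound $C\|f\|_{M_{p,k}(w)}\|b\|_{BMO}\,w(B)^{(k-1)/p}$, and raising to the $p$-th power produces $w(B)^{k-1}$, which cancels the $w(B)^{1-k}$ on the left to give (3.6). The main technical obstacle will be the bookkeeping of the mean-value transitions: one must shuttle between $b_{B,w}$, $b_{B}$, $b_{2^{j+1}B}$ and $b_{2^{j+1}B,\sigma}$ while paying only $C\|b\|_{BMO}$ or $C(j+1)\|b\|_{BMO}$ at each step, so that the subsequent reverse-doubling decay is strong enough to close the series.
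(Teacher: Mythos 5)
Your proposal follows essentially the same route as the paper's proof: dyadic annular decomposition, H\"older's inequality against $\sigma=w^{1-p'}$, insertion of the $\sigma$-average of $b$ on $2^{j+1}B$, a telescope of mean values costing $C(j+1)\|b\|_{BMO}$, the $A_{p}$ cancellation $w(2^{j+1}B)^{1/p}\sigma(2^{j+1}B)^{1/p'}\leq C|2^{j+1}B|$, and reverse doubling to sum the series. The only cosmetic difference is that you control the weighted oscillation terms via the $A_{\infty}$ equivalence (3.3) and reverse H\"older, while the paper invokes the weighted $BMO$ characterization (3.5) and John--Nirenberg together with (2.5); these are interchangeable.
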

\begin{proof}
Using H\"{o}lder's inequality to the left-hand-side of (3.6), we have
\begin{align*}
&\left(\int_{|x_{0}-y|>2r}\frac{|f(y)|}{|x_{0}-y|^{n}}|b_{B,w}-b(y)|dy\right)^{p}w(B)^{1-k}\\
&\leq \left(\sum_{j=1}^{\infty}\int_{2^{j}r<|x_{0}-y|<2^{j+1}r}\frac{|f(y)|}{|x_{0}-y|^{n}}|b_{B,w}-b(y)|dy\right)^{p}w(B)^{1-k}\\
&\leq \left(\sum_{j=1}^{\infty}\frac{1}{|2^{j}B|}\int_{2^{j+1}B}|f(y)||b_{B,w}-b(y)|dy\right)^{p}w(B)^{1-k}\\
&\leq C\left[\sum_{j=1}^{\infty}\frac{1}{|2^{j}B|}\left(\int_{2^{j+1}B}|f(y)|^{p}w(y)dy\right)^{\frac{1}{p}}\left(\int_{2^{j+1}B}|b_{B,w}-b(y)|^{p'}w(y)^{1-p'}dy\right)^{\frac{1}{p'}}\right]^{p}w(B)^{1-k}\\
&\leq C\|f\|_{M_{p,k}(w)}^{p}\left[\sum_{j=1}^{\infty}\frac{w(2^{j+1}B)^{\frac{k}{p}}}{|2^{j}B|}\left(\int_{2^{j+1}B}|b_{B,w}-b(y)|^{p'}w(y)^{1-p'}dy\right)^{\frac{1}{p'}}\right]^{p}w(B)^{1-k}.
\end{align*}

For the simplicity of analysis, we denote $A$ as
$$
\left(\int_{2^{j+1}B}|b_{B,w}-b(y)|^{p'}w(y)^{1-p'}dy\right)^{\frac{1}{p'}}.
$$
By an elementary estimate, we have
\begin{align*}
A
&\leq \left(\int_{2^{j+1}B}(|b_{2^{j+1}B,w^{1-p'}}-b(y)|+|b_{2^{j+1}B,w^{1-p'}}-b_{B,w}|)^{p'}w(y)^{1-p'}dy\right)^{\frac{1}{p'}}\\
&\leq \left\|\frac{|b_{2^{j+1}B,w^{1-p'}}-b(\cdot)|+|b_{2^{j+1}B,w^{1-p'}}-b_{B,w}|}{w(\cdot)}\right\|_{L^{p'}(w)}\\
&\leq \left(\int_{2^{j+1}B}|b_{2^{j+1}B,w^{1-p'}}-b(y)|w(y)^{1-p'}dy\right)^{\frac{1}{p'}}+|b_{2^{j+1}B,w^{1-p'}}-b_{B,w}|w^{1-p'}(2^{j+1}B)^{\frac{1}{p'}}\\
&= :J+JJ.
\end{align*}

For the term $J$, Lemma 3.2 implies
$$
J\leq C\|b\|_{BMO(w^{1-p'})}w^{1-p'}(2^{j+1}B)^{\frac{1}{p'}}\leq Cw^{1-p'}(2^{j+1}B)^{\frac{1}{p'}}.\eqno(3.7)
$$
To deal with $JJ$, by (3.2), we have
\begin{equation*}
\begin{split}
|b_{2^{j+1}B,w^{1-p'}}-b_{B,w}|
&\leq |b_{2^{j+1}B,w^{1-p'}}-b_{2^{j+1}B}|+|b_{2^{j+1}B}-b_{B}|+|b_{B}-b_{B,w}|\\
&\leq \frac{1}{w^{1-p'}(2^{j+1}B)}\int_{2^{j+1}B}|b(y)-b_{2^{j+1}B}|w(y)^{1-p'}dy+2^{n}(j+1)\|b\|_{BMO(\mathbb{R}^{n})}\\
&\quad\quad\quad\quad+\frac{1}{w(B)}\int_{B}|b(y)-b_{B}|w(y)dy\\
&:=JJ_{1}+JJ_{2}+JJ_{3}.
\end{split}
\end{equation*}
Combining (2.5) with (3.1),
\begin{align*}
JJ_{3}
&= \frac{1}{w(B)}\int_{0}^{\infty}w(\{x\in B:|b(y)-b_{B}|>\alpha \})d\alpha\\
&\leq C\int_{0}^{\infty}e^{-C_{2}\alpha\delta/\|b\|_{BMO(\mathbb{R}^{n})}}d\alpha\\
&\leq C.
\end{align*}
In the same manner we can see that
$$
JJ_{1}\leq C.
$$
It follows immediately that
$$
JJ\leq C(2^{n}(j+1)+2)w^{1-p'}(2^{j+1}B)^{\frac{1}{p'}}.\eqno(3.8)
$$
As a by-product of (3.7) and (3.8), we have
$$
A\leq C(j+1)w^{1-p'}(2^{j+1}B)^{\frac{1}{p'}}.
$$
Then, applying (2.2), the proof of (3.6) based on the following observation
\begin{align*}
&\left[\sum_{j=1}^{\infty}\frac{w(2^{j+1}B)^{\frac{k}{p}}}{|2^{j}B|}\left(\int_{2^{j+1}B}|b(y)-b_{B,w}|^{p'}w(y)^{1-p'}dy\right)^{\frac{1}{p'}}\right]^{p}
w(B)^{1-k}\\
&\leq C\left[\sum_{j=1}^{\infty}\frac{w(B)^{\frac{1-k}{p}(j+1)}}{w(2^{j+1}B)^{\frac{1-k}{p}}}\right]^{p}=C.
\end{align*}
\end{proof}

{\it Proof of Theorem $3.3$.}
The task is now to find a constant $C$ such that for fixed ball $B=B(x_{0},r)$, we can obtain
$$
\frac{1}{w(B)^{k}}\int_{B}\left|S_{b}f(x)\right|^{p}w(x)dx\leq C\|f\|_{M_{p,k}(w)}^{p}.\eqno(3.9)
$$

We decompose $f=f\chi_{2B}+f_{\chi_{(2B)^{c}}}:=f_{1}+f_{2},$ and consider the corresponding splitting
\begin{align*}
\int_{B}\left|S_{b}f(x)\right|^{p}w(x)dx&\leq C\left(\int_{B}|S_{b}f_{1}(x)|^{p}w(x)dx+\int_{B}|S_{b}f_{2}(x)|^{p}w(x)dx\right)\\
&=:K+KK.
\end{align*}

An application of Lemma 3.5 and $w\in A_{p}$ yields
$$
K\leq C\int_{2B}|f(x)|^{p}w(x)dx\leq C\|f\|_{M_{p,k}(w)}^{p}w(B)^{k}.\eqno(3.10)
$$

To estimate the other term $KK$, we note that for $x\in B$ and $y\in (2B)^{c}$, $|x_{0}-y|<C|x-y|$. Then a further use of $(1.8)$ derives that
\begin{align*}
\left|S_{b}f_{2}(x)\right|^{p}&=\left|\int_{\mathbb{R}^{n}}e^{ip(x,y)}K(x,y)f_{2}(y)(b(x)-b(y))dy\right|^{p}\\
&\leq C\left(\int_{\mathbb{R}^{n}}\frac{|f_{2}(y)||b(x)-b(y)|}{|x-y|^{n}}dy\right)^{p}\\
&\leq C\left(\int_{|x_{0}-y|>2r}\frac{|f(y)|}{|x_{0}-y|^{n}}\{|b(x)-b_{B,w}|+|b_{B,w}-b(y)|\}dy\right)^{p}.
\end{align*}
where $b_{B,w}=\frac{1}{w(B)}\int_{B}b(x)w(x)dx$. Then, we have
\begin{align*}
KK&\leq C\left(\int_{|x_{0}-y|>2r}\frac{|f(y)|}{|x_{0}-y|^{n}}dy\right)^{p}\int_{B}|b(x)-b_{B,w}|^{p}w(x)dx\\
&\quad\quad\quad\quad+C\left(\int_{|x_{0}-y|>2r}\frac{|f(y)|}{|x_{0}-y|^{n}}|b(y)-b_{B,w}|dy\right)^{p}w(B)\\
&:= KK_{1}+KK_{2}.
\end{align*}
A further use of proposition 3.7, we get
$$KK_{2}\leq C\|f\|_{M_{p,k}(w)}^{p}w(B)^{k}.$$

To get the desired estimate, we are led to estimate the term $KK_{1}$. This estimate will be done via (2.1), (2.3) and Lemma 3.2.

\begin{equation*}
\begin{split}
KK_{1}&=\left(\sum_{j=1}^{\infty}\int_{2^{j}r<|x_{0}-y|<2^{j+1}r}\frac{|f(y)|}{|x_{0}-y|^{n}}dy\right)^{p}\int_{B}|b(x)-b_{B,w}|^{p}w(x)dx\\
&\leq\left(\sum_{j=1}^{\infty}\frac{1}{|2^{j}B|}\int_{2^{j+1}B}|f(y)|dy\right)^{p}\int_{B}|b(x)-b_{B,w}|^{p}w(x)dx\\
&\leq C\sum_{j=1}^{\infty}\frac{1}{|2^{j}B|}\left(\frac{1}{w(2^{j+1}B)^{k}}\int_{2^{j+1}B}|f(y)|^{p}w(y)dy\right)^{\frac{1}{p}}\\
&\quad\quad\quad\quad\times w(2^{j+1}B)^{\frac{k}{p}}\left(\int_{2^{j+1}B}w(y)^{-\frac{1}{p-1}}dy\right)^{\frac{p-1}{p}}\int_{B}|b(x)-b_{B,w}|^{p}w(x)dx\\
&\leq C\|f\|_{M_{p,k}(w)}\left(\sum_{j=1}^{\infty}\frac{|2^{j+1}B|^{-\frac{1}{p}}}{|2^{j}B|}\left(\frac{1}{|2^{j+1}B|}\int_{2^{j+1}B}w(y)dy\right)^{-\frac{1}{p}}w(2^{j+1}B)^{\frac{k}{p}}\right)^{p}\\
&\quad\quad\quad\quad\times \int_{B}|b(x)-b_{B,w}|^{p}w(x)dx\\
&\leq C\|f\|_{M_{p,k}(w)}^{p}\|b\|_{BMO(\mathbb{R}^{n})}^{p}\sum_{j=1}^{\infty}\left(\frac{w(B)^{\frac{1-k}{p}}}{w(2^{j+1}B)^{\frac{1-k}{p}}}\right)^{p}w(B)^{k}\\
&\leq C\|f\|_{M_{p,k}(w)}^{p}w(B)^{k}
\end{split}
\end{equation*}
Hence
$$KK\leq C\|f\|_{L^{p,k}(w)}^{p}w(B)^{k}.\eqno(3.11)$$

Combing (3.10), (3.11), we obtain (3.9), which is the desired conclusion.

{\it Proof of Theorem $3.4$.}\,\,  As in the proof of Theorem 1.3.
Theorem 3.4 can be deduced via Lemma 3.6 and the following
observation
$$
\left| S_{b}f(x)\right|\leq \int\frac{|f(y)|}{|x-y|^{n-\beta}}dy=I_{\alpha}(|f|)(x).
$$



\begin{thebibliography}{99}

\bibitem{A}
\textsc{Adams D.R.}, A note on Riesz potentials, Duke Math. J. 42(1975), 765--778.
\bibitem{CF}
\textsc{Chiarenza F. and Frasca M.}, Morrey spaces and Hardy-Littlewood maximal function, Rend. Math. Appl. 7(7)(1987), 273--279.
\bibitem{FLY}
\textsc{ Fan D.S., Lu S.Z. and Yang D.C.},
Regularity in Morry spaces of strong solutions to nondivergence elliptic equations with VMO coefficients, Georgian Math. J. 5(5)(1998), 425--440.
\bibitem{G}
\textsc{Grafakos L.},
Classical and Modern Fourier Analysis, Pearson Education, Inc. Upper Saddle River, New Jersey, 2004.
\bibitem{GT}
\textsc{Grafakos L. and Torres R.}, Multilinear Calder\'{o}n-Zygmund
theory, Adv. Math, 165(2002), 124--164.
\bibitem{KM}
\textsc{Komori Y. and Mizuhara T.},
Notes on commutators and Morrey spaces.
Hokkaido Math. J. 32(2003), 345--353.
\bibitem{KS}
\textsc{Komori Y. and Shirai S.},
Weighted Morrey spaces and a singular integral operator, Math. Nachr. 282(2009), 219--231.
\bibitem{L1}
\textsc{ Lu S.Z.},
Four Lectures on Real $H^{p}$ spaces, World Scientific Publishing, Singapore, 1995.
\bibitem{L2}
\textsc{ Lu S.Z.},
Multilinar oscillatory integrals with Calder\'{o}n-Zygmund kernel, Sci. China(Ser.A) 42(1999), 1039--1046.
\bibitem{L3}
\textsc{Lu S.Z.},
A class of oscillatory integrals, Int. J. Appl. Math. Sci. 2(1)(2005), 42--58.
\bibitem{LZ}
\textsc{ Lu S.Z. and Zhang Y.},
Weighted norm inequality of a class of oscillatory singular operators, Chin. Sci. Bull. 37(1992), 9--13.
\bibitem{LX}
\textsc{ Lu S.Z.and Xia X.}, Boundedness of commutators of an
oscillatory singular operators, Studia Math. 186(2008), 15--27.
\bibitem{PS}
\textsc{Phong D.H. and Stein E.M.},
 Singular integrals related to the Radon transform and boundary value problems, Proc. Nat. Acad. USA. 80(1983), 7697--7701.
\bibitem{Mo}
\textsc{Morrey C.B.}, On the solutions of quasi-linear elliptic partial differential equations, Trans. Amer. Math. Soc. 43(1938), 126--166.
\bibitem{Mu}
\textsc{Muckenhoupt B.},
Weighted norm inqualities for the Hardy maximal function, Trans. Amer. Math. Soc. 165(1972), 207--226.
\bibitem{MW}
\textsc{Muckenhoupt B. and Wheeden R.}, Weighted bounded mean
oscillation and the Hilbert transform, Studia Math. 54(1976),
221--237.
\bibitem{RS}
\textsc{Ricci F. and  Stein E.M.},
Harmonic analysis on Nilpotant groups and singular integrals I: Oscillatory Integrals, J. Funct. Anal. 73(1987), 179--194.
\bibitem{Sa}
\textsc{Sato S.}, Weighted weak type (1,1) estimates for oscillatory
integrals, Studia Math. 47(2001), 1--17.
\bibitem{Sh}
\textsc{Shi S.G.},
Weighted boundedness for the commutators of one class of oscillatory integral operators, J. Beijing Normal Univ. (Nat. Sci.)  47(2011), 344--346.
\bibitem{SFL}
\textsc{Shi S.G.,  Fu Z.W. and  Lu S.Z.},
Weighted norm inequalities for fractional oscillatory integrals and their commutators, arXiv:1111.3418v1.
\bibitem{ST}
\textsc{Shi Y.L.,  Tao X.X.}, Multilinear Riesz potential operators
on Herz-type spaces and generalized Morrey spaces, Hokkaido Math. J.
38 (2009), 635--662.
\bibitem{SW}
\textsc{Stein E.M. and Weiss G.},
Interpolation of operators with change of measures, Trans. Amer. Math. Soc. 87(1958),  159--172.
\bibitem{TYZ}
\textsc{Tao X.X., Yu X., Zhang H.H.}, Multilinear
Calder\'{o}n-Zygmund operators on variable exponent Morrey spaces
over domains, Appl. Math. J. Chinese Univ. 26(2)(2011), 187--197.
\bibitem{T}
\textsc{Torchinsky A.},
Real variable Methods in Harmonic Analysis Academic Press, San DIego, 1986.

\end{thebibliography}
\end{document}